\def\thtext#1{
  \catcode`@=11
  \gdef\@thmcountersep{. #1}
  \catcode`@=12
}
\def\threst{
  \catcode`@=11
  \gdef\@thmcountersep{.}
  \catcode`@=12
}
\def\ig#1#2#3#4{\begin{figure}[!ht]\begin{center}%
\includegraphics[height=#2\textheight]{#1.eps}\caption{#4}\label{#3}%
\end{center}\end{figure}}
\theoremstyle{plain}
\newtheorem{thm}{Theorem}
\newtheorem{prop}{Proposition}[section]
\newtheorem{cor}[prop]{Corollary}
\newtheorem{lem}[prop]{Lemma}
\theoremstyle{definition}
\newtheorem{examp}[prop]{Example}
\newtheorem{dfn}[prop]{Definition}
\newtheorem{rk}[prop]{Remark}
\newtheorem{constr}[prop]{Construction}
 \def\.{.\spacefactor\@m}
\newcommand{\e}{\varepsilon}
\newcommand{\G}{\Gamma}
\renewcommand{\r}{\rho}
\newcommand{\s}{\sigma}
\newcommand{\cC}{\mathcal{C}}
\newcommand{\cH}{\mathcal{H}}
\newcommand{\cM}{\mathcal{M}}
\newcommand{\cP}{\mathcal{P}}
\newcommand{\cR}{\mathcal{R}}
\newcommand{\N}{\mathbb{N}}
\newcommand{\R}{\mathbb{R}}
\newcommand{\hR}{{\hat R}}
\newcommand{\hx}{{\hat x}}
\newcommand{\hX}{{\hat X}}
\newcommand{\hy}{{\hat y}}
\newcommand{\hY}{{\hat Y}}
\renewcommand{\c}{\circ}
\renewcommand{\:}{\colon}
\newcommand{\0}{\emptyset}
\newcommand{\sm}{\setminus}
\newcommand{\oPi}{\stackrel{\raise-2pt\hbox{$\c$}}\Pi}
\newcommand{\oW}{\stackrel{\raise-2pt\hbox{$\c$}}W}
\renewcommand{\sp}{\supset}
\renewcommand{\ss}{\subset}
\newcommand{\x}{\times}
\newcommand{\diam}{\operatorname{diam}}
\newcommand{\dis}{\operatorname{dis}}
\newcommand{\im}{\operatorname{im}}
\newcommand{\Int}{\operatorname{Int}}
\newcommand{\mf}{\operatorname{mf}}
\newcommand{\opt}{{\operatorname{opt}}}
\newcommand{\smt}{\operatorname{smt}}
\newcommand{\SMT}{\operatorname{SMT}}
\newcommand{\ssr}{\operatorname{ssr}}
\title{Gromov--Hausdorff Distance, Irreducible Correspondences, Steiner Problem, and Minimal Fillings}
\author{Alexander O. Ivanov, Alexey A. Tuzhilin}
\date{}
\begin{document}
\maketitle
\begin{abstract}
We introduce irreducible correspondences that enables us to calculate the Gromov--Hausdorff distances effectively. By means of these correspondences, we show that the set of all metric spaces each consisting of no more than $3$ points is isometric to a polyhedral cone in the space $\R^3$ endowed with the maximum norm. We prove that for any $3$-point metric space such that all the triangle inequalities are strict in it, there exists a neighborhood such that the Steiner minimal trees (in Gromov--Hausdorff space) with boundaries from this neighborhood are minimal fillings, i.e., it is impossible to decrease the lengths of these trees by isometrically embedding their boundaries into any other ambient metric space. On the other hand, we construct an example of $3$-point boundary whose points are $3$-point metric spaces such that its Steiner minimal tree in the Gromov--Hausdorff space is not a minimal filling. The latter proves that the Steiner subratio of the Gromov--Hausdorff space is less than 1. The irreducible correspondences enabled us to create a quick algorithm for calculating the Gromov--Hausdorff distance between finite metric spaces. We carried out a numerical experiment and obtained more precise upper estimate on the Steiner subratio: we have shown that it is less than $0.857$.
\end{abstract}

\section*{Introduction}
\markright{\thesection.~Introduction}
The Gromov--Hausdorff distance between any metric spaces measures their ``degree of similarity'' (see below for exact definition). Explicit calculation of this distance is a nontrivial problem. There exists a few methods making possible to progress in this direction. One of them is based on the technique of correspondences.

In the present paper we discuss this technique, describe some examples of its application, define so-called irreducible correspondences, and show how the latter ones can be used for effective calculations of the Gromov--Hausdorff distance.

As a consequence we show that the family of $3$-point metric spaces is isometric to a polyhedral cone in $\R^3_\infty$, where $\R^3_\infty$ stands for the space $\R^3$ with the norm $\bigl\|(x^1,x^2,x^3)\bigr\|=\max|x^i|$. This isometry leads to some other results related with the Steiner problem.

Recall that a \emph{Steiner minimal tree}, or a \emph{shortest tree}, on a finite subset $M$ of a metric space $X$, is a tree of the least possible length among all the trees constructed on finite sets $V\ss X$, $V\sp M$, see~\cite{IvaTuz} or~\cite{HwRiW} for further discussion. Such set $M$ is called the \emph{boundary set\/} of this shortest tree. Notice that such a tree may not exist for some $M$, although ``the length'' of such a tree (more exactly, the infimum of the lengths of the trees on various $V$) is always defined. Since $V$ may contain additional points, the lengths of Steiner minimal trees depend not only on the distances between the points from $M$, but also on the geometry of the ambient space $X$. If we embed isometrically the space $M$ into various metric spaces $X$ and minimize the lengths of the corresponding shortest trees over all such embeddings, we get the value which is called the \emph{length of minimal filling for $M$}, see~\cite{ITMinFil}.

One of rather interesting problems is to describe the metric spaces for which the length of each shortest tree coincides with the length of minimal filling for the boundary of this tree. An example of such spaces is $\R^n_\infty$ that is proved by Z.\,N.~Ovsyannikov~\cite{Ovs}. Complete classification of Banach spaces satisfying this property together with the additional restriction that each finite subset is connected by some shortest tree is obtained by B.\,B.~Bednov and P.\,A.~Borodin~\cite{BB}.

The isometry between the family of $3$-point metric spaces and the cone in $\R^3_\infty$ has led us to conjecture that the lengths of the Steiner minimal trees in the space $\cM$ of isometry classes of compact metric spaces with Gromov--Hausdorff distance may be equal to the lengths of the corresponding minimal fillings.

To our surprise, we constructed a counterexample to this conjecture: it turns out that even in the case of $3$-point metric spaces, a Steiner minimal tree may be longer than the minimal filling for its boundary. In~\cite{ITMinFil} we introduced the concept of Steiner subratio which measures in some sense a ``variational curvature'' of the ambient space, i.e., how many times the minimal fillings for a finite subset of the space can be shorter than the Steiner minimal tree with the same boundary. Thus, in the present paper we show that the Steiner subratio of the Gromov--Hausdorff space is less than $1$; more precisely, it is less than $0.857$.

\section{Preliminaries}
\markright{\thesection.~Preliminaries}
Let $X$ be an arbitrary metric space. By $|xy|$ we denote the distance between its points $x$ and $y$. Let $\cP(X)$ be the family of all nonempty subsets of the space $X$. For $A,\,B\in\cP(X)$ put
$$
d_H(A,B)=\max\bigl\{\sup_{a\in A}\inf_{b\in B}|ab|,\,\sup_{b\in B}\inf_{a\in A}|ab|\bigr\}.
$$
The value $d_H(A,B)$ is called the \emph{Hausdorff distance between $A$ and $B$}.

Let $\cH(X)\ss\cP(X)$ be the family of all nonempty closed bounded subsets of $X$.

\begin{prop}[\cite{BurBurIva}]
The restriction of $d_H$ onto $\cH(X)$ is a metric.
\end{prop}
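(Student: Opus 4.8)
The plan is to verify the three metric axioms for $d_H$ restricted to $\cH(X)$: finiteness and nonnegativity, symmetry, the identity of indiscernibles, and the triangle inequality. Symmetry is immediate from the symmetric form of the definition (the two inner expressions are interchanged by swapping $A$ and $B$). Nonnegativity is clear since $d_H$ is a max of two suprema of nonnegative quantities. Finiteness is where closedness and boundedness first enter: for $A,B\in\cH(X)$, pick $a_0\in A$, $b_0\in B$; boundedness of $A$ and $B$ gives finite diameters, and then for every $a\in A$ we have $\inf_{b\in B}|ab|\le|ab_0|\le|aa_0|+|a_0b_0|\le\diam A+|a_0b_0|$, so the first supremum is finite, and symmetrically the second; hence $d_H(A,B)<\infty$.

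Next I would establish the identity of indiscernibles. If $A=B$ then each inner infimum is $0$, so $d_H(A,B)=0$. Conversely, suppose $d_H(A,B)=0$. Then $\sup_{a\in A}\inf_{b\in B}|ab|=0$, so for every $a\in A$ we have $\inf_{b\in B}|ab|=0$, meaning $a$ is in the closure of $B$; since $B$ is closed, $a\in B$, so $A\ss B$. Symmetrically $B\ss A$, whence $A=B$. This is the step where the hypothesis that the sets are \emph{closed} is essential — without it one only gets equality of closures.

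For the triangle inequality, let $A,B,C\in\cH(X)$; I want $d_H(A,C)\le d_H(A,B)+d_H(B,C)$. It suffices to bound each of the two one-sided terms defining $d_H(A,C)$. Fix $a\in A$ and $\e>0$. Choose $b\in B$ with $|ab|\le\inf_{b'\in B}|ab'|+\e\le\sup_{a'\in A}\inf_{b'\in B}|a'b'|+\e\le d_H(A,B)+\e$, and then $c\in C$ with $|bc|\le\inf_{c'\in C}|bc'|+\e\le d_H(B,C)+\e$. By the triangle inequality in $X$, $\inf_{c'\in C}|ac'|\le|ac|\le|ab|+|bc|\le d_H(A,B)+d_H(B,C)+2\e$. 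Taking the supremum over $a\in A$ and letting $\e\to0$ gives $\sup_{a\in A}\inf_{c\in C}|ac|\le d_H(A,B)+d_H(B,C)$; the symmetric argument handles the other term, and taking the max yields the claim.

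The only genuinely delicate point is the identity of indiscernibles, and specifically the use of closedness there; the triangle inequality is a routine $\e$-argument and the remaining axioms are immediate. Nonemptiness of the sets is used implicitly throughout so that the infima and suprema are over nonempty index sets and hence well behaved (in particular the suprema of the empty-free families are over nonempty sets, avoiding the value $-\infty$).
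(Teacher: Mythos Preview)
Your proof is correct and follows the standard route: verify finiteness via boundedness, identity of indiscernibles via closedness, and the triangle inequality by an $\e$-chaining argument through an intermediate set. Each step is sound, and you correctly identify where the hypotheses (nonempty, closed, bounded) are actually used.

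As for comparison with the paper: there is nothing to compare. The paper does not give a proof of this proposition at all; it merely states the result and cites \cite{BurBurIva} (Burago--Burago--Ivanov, \emph{A Course in Metric Geometry}) as the source. Your argument is exactly the standard one found in that reference and elsewhere, so you have supplied what the paper chose to omit.
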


Let $X$ and $Y$ be metric spaces. A triple $(X',Y',Z)$ consisting of a metric space $Z$ and its subsets $X'$ and $Y'$ which are isometric to $X$ and $Y$, respectively, is called a \emph{realization of the pair $(X,Y)$}. \emph{The Gromov--Hausdorff distance $d_{GH}(X,Y)$ between $X$ and $Y$} is the infimum of the numbers $r$ for which there exists a realization $(X',Y',Z)$ of the pair $(X,Y)$ such that $d_H(X',Y')\le r$.

By $\cM$ we denote  the family of all compact metric spaces (considered up to isometry) endowed with the Gromov--Hausdorff metric. This $\cM$ is often called the \emph{Gromov--Hausdorff space}.

\begin{prop}[\cite{BurBurIva}]
The restriction of $d_{GH}$ onto $\cM$ is a metric.
\end{prop}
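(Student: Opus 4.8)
The plan is to check the three axioms of a metric. Non-negativity is built into the definition, and symmetry is immediate since a realization $(X',Y',Z)$ of $(X,Y)$ is simultaneously a realization $(Y',X',Z)$ of $(Y,X)$; also $d_{GH}(X,X)=0$ via the trivial realization $(X,X,X)$. Thus the real content is the triangle inequality together with the separation property $d_{GH}(X,Y)=0\imply X$ and $Y$ are isometric (the converse being obvious).

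For the triangle inequality, fix $X,Y,W\in\cM$ and $\e>0$. Choose realizations $(X',Y',Z_1)$ with $d_H(X',Y')<d_{GH}(X,Y)+\e$ and $(Y'',W'',Z_2)$ with $d_H(Y'',W'')<d_{GH}(Y,W)+\e$, and glue $Z_1$ and $Z_2$ along the identification of the isometric copies $Y'\cong Y\cong Y''$: on the disjoint union modulo this identification keep the original metrics inside each $Z_i$ and set $d(z_1,z_2)=\inf_{y}\bigl(d_{Z_1}(z_1,y)+d_{Z_2}(z_2,y)\bigr)$ for $z_1\in Z_1$, $z_2\in Z_2$. I would verify routinely that this is a metric on the resulting space $Z$; the only point needing the hypotheses is that $d(z_1,z_2)=0$ forces $z_1=z_2$, which holds because $Y$ is compact and hence its copy is closed in each $Z_i$, so points outside it lie at positive distance from it. In $Z$ the sets $X'$, $W''$, and the common copy of $Y$ are compact, and their pairwise Hausdorff distances coincide with those computed inside $Z_1$ and $Z_2$; the triangle inequality for $d_H$ on closed bounded subsets (the first Proposition above) then gives $d_H(X',W'')\le d_H(X',Y')+d_H(Y'',W'')<d_{GH}(X,Y)+d_{GH}(Y,W)+2\e$, so $d_{GH}(X,W)\le d_{GH}(X,Y)+d_{GH}(Y,W)+2\e$, and letting $\e\to0$ completes this part.

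For the separation property, assume $d_{GH}(X,Y)=0$. For each $n$ pick a realization with $d_H(X_n,Y_n)<1/n$ in some $Z_n$, and, identifying $X$ and $Y$ with $X_n$ and $Y_n$, for every $x\in X$ choose $f_n(x)\in Y$ with $d_{Z_n}(x,f_n(x))<1/n$. The triangle inequality in $Z_n$ yields $\bigl|\,|f_n(x)f_n(x')|-|xx'|\,\bigr|<2/n$ for all $x,x'$, and the Hausdorff bound makes $f_n(X)$ a $(2/n)$-net of $Y$. Now fix a countable dense set $S=\{x_1,x_2,\dots\}\ss X$; since $Y$ is compact, a diagonal extraction gives a subsequence along which $f_n(x_k)$ converges for every $k$, and the resulting map $f\colon S\to Y$ is distance preserving, hence extends by uniform continuity (and completeness of $Y$) to an isometric embedding $f\colon X\to Y$ with compact, therefore closed, image. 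The one genuinely nontrivial step — and the place where compactness is essential — is to see that $f$ is onto. I would do this either by invoking the classical fact that an isometric self-embedding of a compact metric space is surjective, applied to $g\c f\colon X\to X$ where $g\colon Y\to X$ is produced in the same way from $d_{GH}(Y,X)=0$ (this forces $g$, and hence $f$, to be bijective), or by noting that the $\e_n$-isometries $f_n$ are equicontinuous with values in the compact space $Y$, so by Arzel\`a--Ascoli a further subsequence converges to $f$ uniformly on $X$, after which surjectivity follows directly from the facts that $f_n(X)$ is a $(2/n)$-net and $X$ is compact. Either way $X$ and $Y$ are isometric, and the proposition is proved.
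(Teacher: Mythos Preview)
The paper does not supply a proof of this proposition; it simply records the result with a reference to~\cite{BurBurIva}. Your argument is correct and is essentially the standard one found there: glue two realizations along the common copy of $Y$ to obtain the triangle inequality, and for the separation axiom pass from $\e_n$-isometries to an honest isometry via a diagonal argument on a countable dense set, then invoke the fact that an isometric self-embedding of a compact metric space is onto.

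One minor caveat: your alternative route to surjectivity via Arzel\`a--Ascoli is not quite right as stated. The maps $f_n$ need not even be continuous---each value $f_n(x)$ is an \emph{arbitrary} point of $Y$ within $1/n$ of $x$ in $Z_n$---so they are certainly not equicontinuous as a family, and Arzel\`a--Ascoli does not apply. Your first route, producing $g\:Y\to X$ symmetrically and observing that $g\c f$ (and $f\c g$) are isometric self-embeddings of compact spaces and hence surjective, is the clean argument and already suffices.
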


Now we give an equivalent definition of the Gromov--Hausdorff distance which is more suitable for specific calculations. Recall that a \emph{relation\/} between the sets $X$ and $Y$ is a subset of the Cartesian product $X\x Y$.  By $\cP(X,Y)$ we denote the set of all nonempty relations between $X$ and $Y$. If $\pi_X\:X\x Y\to X$ and $\pi_Y\:X\x Y\to Y$ are canonical projections, i.e., $\pi_X(x,y)=x$ and $\pi_Y(x,y)=y$, then we denote their restrictions onto each relation $\s\in\cP(X,Y)$ in the same way.

A relation $R$ between $X$ and $Y$ is called a \emph{correspondence}, if the restrictions of the canonical projections $\pi_X$ and $\pi_Y$ onto $R$ are surjections. By $\cR(X,Y)$ we denote the set of all correspondences between $X$ and $Y$. 

Let $X$ and $Y$ be metric spaces, then for each relation $\s\in\cP(X,Y)$ its \emph{distortion $\dis\s$} is defined as follows:
$$
\dis\s=\sup\Bigl\{\bigl||xx'|-|yy'|\bigr|: (x,y)\in\s,\ (x',y')\in\s\Bigr\}.
$$

The following result is well-known.

\begin{prop}[\cite{BurBurIva}]\label{th:GH-metri-and-relations}
For any metric spaces $X$ and $Y$ it holds
$$
d_{GH}(X,Y)=\frac12\inf\bigl\{\dis R\mid R\in\cR(X,Y)\bigr\}.
$$
\end{prop}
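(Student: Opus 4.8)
The plan is to prove both inequalities relating $d_{GH}(X,Y)$ to $\frac12\inf_R\dis R$ by translating between realizations of the pair $(X,Y)$ and correspondences between $X$ and $Y$. Write $\rho=\frac12\inf\{\dis R\mid R\in\cR(X,Y)\}$; I want $d_{GH}(X,Y)=\rho$.

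For the inequality $d_{GH}(X,Y)\le\rho$, I would start from an arbitrary correspondence $R\in\cR(X,Y)$ with $\dis R=2r<\infty$ (if no correspondence has finite distortion there is nothing to prove on this side) and build a realization. The idea is to put a pseudometric on the disjoint union $X\sqcup Y$ that restricts to the given metrics on $X$ and on $Y$, and sets the ``cross'' distance between $x\in X$ and $y\in Y$ to be $\inf\{|xx'|+r+|yy'|:(x',y')\in R\}$; one checks this is a pseudometric (the triangle inequality is the only thing to verify, and it follows from $\dis R\le 2r$ together with the defining infimum), that it does not change the original distances, and that in the associated metric quotient $Z$ the images $X'$, $Y'$ are isometric copies of $X$, $Y$ with $d_H(X',Y')\le r$ — the latter because $R$ is a correspondence, so every point of one space is within $r$ of the image of the other. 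Taking the infimum over $R$ gives $d_{GH}(X,Y)\le\rho$.

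For the reverse inequality $\rho\le d_{GH}(X,Y)$, I would take any realization $(X',Y',Z)$ with $d_H(X',Y')<r$ and produce a correspondence of small distortion. The natural choice is $R=\{(x,y):|x'y'|_Z<r\}$, i.e.\ pair $x$ with $y$ whenever their images in $Z$ are closer than $r$; the Hausdorff bound guarantees every $x'$ has some $y'$ within $r$ and vice versa, so $R$ is a genuine correspondence. Then for $(x,y),(x',y')\in R$ the triangle inequality in $Z$ gives $\bigl||xx'|-|yy'|\bigr|\le |x\,y|_Z+|x'\,y'|_Z<2r$, hence $\dis R\le 2r$, so $\rho\le r$; letting $r\downarrow d_{GH}(X,Y)$ finishes it. A small care point: one should check the edge cases where $d_{GH}(X,Y)=0$ or where the infimum over correspondences is not attained, but these do not affect the inequalities.

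The main obstacle is the first direction: verifying that the proposed cross-distance formula genuinely yields a pseudometric — in particular the mixed triangle inequalities of the form $|xy|_Z\le|xx''|+|x''y|_Z$ and $|xy|_Z\le|xy'|_Z+|y'y|$ — and that the metric identification does not collapse $X'$ or $Y'$ internally (it can only identify points of $X$ with points of $Y$, never two distinct points of $X$). Everything else is a routine estimate once the correct formulas are in place; this is a known result (\cite{BurBurIva}), so I would keep the write-up brief and refer there for the details already in the literature.
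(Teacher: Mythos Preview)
The paper does not supply its own proof of this proposition; it is stated as a well-known result and attributed to~\cite{BurBurIva}. Your argument is correct and is precisely the standard proof found in that reference: build a metric on $X\sqcup Y$ from a correspondence to get one inequality, and extract a correspondence from a realization to get the other. There is nothing further to compare.
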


\begin{dfn}
A correspondence $R\in\cR(X,Y)$ is called \emph{optimal\/} if $d_{GH}(X,Y)=\frac12\dis R$. By $\cR_\opt(X,Y)$ we denote the set of all optimal correspondences between $X$ and $Y$.
\end{dfn}

\begin{prop}[\cite{IvaIliadisTuz}]\label{prop:optimal-correspondence-exists}
For any $X,\,Y\in\cM$ we have $\cR_\opt(X,Y)\ne\0$.
\end{prop}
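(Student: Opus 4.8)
The plan is to exhibit an optimal correspondence as a point where the distortion functional attains its infimum over a suitable compact space; the whole argument then rests on compactness together with lower semicontinuity. First I would pass from arbitrary correspondences to \emph{closed} ones. If $R\in\cR(X,Y)$, then its closure $\bar R$ in $X\x Y$ is again a correspondence, since $\pi_X(\bar R)\sp\pi_X(R)=X$ and likewise $\pi_Y(\bar R)=Y$; moreover $\dis\bar R=\dis R$, because $\bigl||xx'|-|yy'|\bigr|$ is continuous in $(x,y,x',y')$ and $\overline{R\x R}=\bar R\x\bar R$, so the supremum defining the distortion is the same for $R$ and for $\bar R$. Hence $\inf\{\dis R: R\in\cR(X,Y)\}$ equals the infimum of $\dis$ over the family $\cR^c(X,Y)$ of \emph{closed} correspondences, and it suffices to realize this latter infimum.

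Next I would use compactness. Since $X$ and $Y$ are compact, $X\x Y$ is compact, hence the hyperspace $\cH(X\x Y)$ of its nonempty closed subsets is compact in the Hausdorff metric $d_H$ (see~\cite{BurBurIva}). The key point is that $\cR^c(X,Y)$ is a closed subset of $\cH(X\x Y)$: if $R_n\in\cR^c(X,Y)$ and $R_n\to R$ in $d_H$, then for any $x\in X$ we pick $y_n\in Y$ with $(x,y_n)\in R_n$ and, since $d_H(R_n,R)\to0$, points $(x_n',y_n')\in R$ with $|xx_n'|\to0$ and $|y_ny_n'|\to0$; by compactness of $Y$ some subsequence $y_{n_k}'$ converges to a point $y\in Y$, so $(x_{n_k}',y_{n_k}')\to(x,y)$, and $(x,y)\in R$ because $R$ is closed, whence $x\in\pi_X(R)$. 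Symmetrically $\pi_Y(R)=Y$, so $R\in\cR^c(X,Y)$. Thus $\cR^c(X,Y)$ is compact, and it is nonempty since it contains $X\x Y$.

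Then I would check that $R\mapsto\dis R$ is lower semicontinuous on $\cH(X\x Y)$: if $R_n\to R$ in $d_H$ and $(x,y),(x',y')\in R$, choose $(x_n,y_n),(x_n',y_n')\in R_n$ converging respectively to $(x,y),(x',y')$; then $\bigl||x_nx_n'|-|y_ny_n'|\bigr|\le\dis R_n$, and passing to the limit gives $\bigl||xx'|-|yy'|\bigr|\le\liminf_n\dis R_n$, so taking the supremum over all such pairs $\dis R\le\liminf_n\dis R_n$. Since a lower semicontinuous function on a nonempty compact set attains its minimum, there is $R\in\cR^c(X,Y)$ with $\dis R=\inf\{\dis R':R'\in\cR^c(X,Y)\}$, and by the first step this equals $\inf\{\dis R':R'\in\cR(X,Y)\}=2\,d_{GH}(X,Y)$ by Proposition~\ref{th:GH-metri-and-relations}. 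Hence $R\in\cR_\opt(X,Y)$.

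The step I expect to be the main obstacle is the claim that a Hausdorff limit of correspondences is again a correspondence: surjectivity of the canonical projections is not a closed condition in general, and it is exactly here that compactness of $X$ and $Y$ individually --- not just of $X\x Y$ --- enters, via the extraction of convergent subsequences of the preimages. Everything else is soft: the reduction to closed correspondences, the compactness of the hyperspace, and the lower semicontinuity of $\dis$.
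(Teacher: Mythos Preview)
Your argument is correct. The reduction to closed correspondences, the compactness of $\cH(X\times Y)$, the closedness of $\cR^c(X,Y)$ therein, and the lower semicontinuity of $\dis$ are all sound as written; the step you flag as the main obstacle is indeed the only place where compactness of the individual factors (rather than of the product alone) is genuinely used, and your subsequence argument handles it. One small remark: $\dis$ is in fact continuous (even $2$-Lipschitz) with respect to $d_H$, not merely lower semicontinuous, but the weaker property is all you need.

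As for comparison with the paper: the paper does not prove this statement at all --- it is quoted without proof from the external reference~\cite{IvaIliadisTuz}. Your compactness/lower-semicontinuity argument via the hyperspace $\cH(X\times Y)$ is the standard route to this result and is presumably close in spirit to what appears there, but there is nothing in the present paper to compare it against.
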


\section{Correspondences and their properties}
\markright{\thesection.~Correspondences and their properties}
In what follows, unless otherwise is stated, $X$ and $Y$ always stand for some nonempty sets. Let us consider each relation $\s\in\cP(X,Y)$ as a multivalued mapping whose domain may be smaller than the entire $X$. Then, similarly to the customary case of mappings, for each $x\in X$  its \emph{image\/} $\s(x)=\{y\in Y:(x,y)\in\s\}$ is defined, and for each $A\ss X$ put  $\s(A)$ to be equal to the union of the images of all elements from $A$. Further, for each $y\in Y$ its \emph{preimage} $\s^{-1}(y)=\{x\in X:(x,y)\in\s\}$ is defined, and for each $B\ss Y$  define its preimage as the union of the preimages of all its elements.

Let $\s\in\cP(X,Y)$ be an arbitrary relation. For each $x\in X$ we define its \emph{multiplicity $k_\s(x)$ in $\s$} as the cardinality of the set $\s(x)$. Similarly, the \emph{multiplicity $k_\s(y)$ in $\s$ of an element $y\in Y$} is the cardinality of the set $\s^{-1}(y)$.

\subsection{Irreducible correspondences}
The inclusion relation on $\cR(X,Y)$ specify a natural partial order: $R_1\le R_2$ iff $R_1\ss R_2$.

\begin{dfn}
The correspondences that are minimal w.r.t\. this order are called \emph{irreducible}, and all the remaining ones \emph{reducible}. By $\cR^0(X,Y)$ we denote the set of all irreducible correspondences between $X$ and $Y$ .
\end{dfn}

Thus, a correspondence is reducible iff it contains a pair $(x,y)$ which can be removed, but the relation remains to be a correspondence. Notice that the graph of a surjective mapping is an example of an irreducible correspondence.

\begin{prop}\label{prop:multiplicity}
A correspondence $R$ is irreducible iff for each $(x,y)\in R$ we have 
$$
\min\bigl(k_R(x),k_R(y)\bigr)=1.
$$
\end{prop}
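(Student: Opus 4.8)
The plan is to prove both implications directly from the definition of irreducibility in terms of removable pairs, translating ``removable'' into a statement about multiplicities.

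First I would establish the easy direction, namely that if some $(x,y)\in R$ satisfies $\min\bigl(k_R(x),k_R(y)\bigr)=1$ fails --- i.e. $k_R(x)\ge 2$ and $k_R(y)\ge 2$ --- then $R$ is reducible. The idea is to remove the single pair $(x,y)$ and check that $R':=R\sm\{(x,y)\}$ is still a correspondence. Surjectivity of $\pi_X$ on $R'$ requires that every point of $X$ still appears as a first coordinate: the only point at risk is $x$ itself, but since $k_R(x)\ge 2$ there is some $y'\ne y$ with $(x,y')\in R'$, so $x$ is still covered. Symmetrically, $k_R(y)\ge 2$ gives some $x'\ne x$ with $(x',y)\in R'$, so $y$ is still covered. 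Hence $R'\in\cR(X,Y)$ and $R'\subsetneq R$, so $R$ is not minimal, i.e. reducible. This proves the contrapositive of one direction: irreducible $\imply$ $\min\bigl(k_R(x),k_R(y)\bigr)=1$ for all $(x,y)\in R$.

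For the converse I would again argue by contraposition: suppose $R$ is reducible, so there is a correspondence $R'\subsetneq R$; pick $(x,y)\in R\sm R'$. The claim is that $k_R(x)\ge 2$ and $k_R(y)\ge 2$, so the multiplicity condition fails at $(x,y)$. Indeed, since $R'$ is a correspondence, $\pi_X$ restricted to $R'$ is surjective, so there is some $(x,y')\in R'\ss R$; because $(x,y)\notin R'$ we have $y'\ne y$, whence $k_R(x)\ge 2$. Symmetrically, surjectivity of $\pi_Y$ on $R'$ yields some $(x',y)\in R'\ss R$ with $x'\ne x$, so $k_R(y)\ge 2$. Thus $\min\bigl(k_R(x),k_R(y)\bigr)\ge 2$, contradicting the hypothesis; hence if the multiplicity condition holds everywhere, $R$ must be irreducible.

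I do not anticipate a genuine obstacle here: the statement is essentially a reformulation of the definition, and the only thing to be careful about is handling the surjectivity bookkeeping correctly (making sure that removing a single pair cannot ``uncover'' any point other than $x$ on the $X$-side and any point other than $y$ on the $Y$-side). The mildest subtlety is that in the reducible case one should note it suffices to consider removing a single pair $(x,y)$ rather than the whole difference $R\sm R'$; this is immediate since $R\sm\{(x,y)\}\supseteq R'$ is still a correspondence. I would present the two contrapositive arguments as the two halves of an ``iff'' and keep the exposition to a few lines.
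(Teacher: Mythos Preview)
Your proof is correct and follows essentially the same route as the paper: both directions are argued by contraposition, showing that a pair $(x,y)$ with $k_R(x)\ge 2$ and $k_R(y)\ge 2$ exists iff one can delete it and remain a correspondence. The only cosmetic difference is that in the converse you start from an arbitrary proper sub-correspondence $R'\subsetneq R$ and pick $(x,y)\in R\sm R'$, whereas the paper directly assumes $R\sm\{(x,y)\}$ is a correspondence; as you yourself note, these are equivalent.
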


\begin{proof}
To start with, suppose that $R$ is an irreducible correspondence, but for some $(x,y)\in R$ the both $k_R(x)$ and $k_R(y)$ are more than $1$. The latter means that there exist $x'\in X$, $x'\ne x$, and $y'\in Y$, $y'\ne y$, such that $(x',y)\in R$ and $(x,y')\in R$. Then $R\sm\bigl\{(x,y)\bigr\}\subset R$ is a correspondence, that contradicts to minimality of $R$.

For the converse, suppose that for each $(x,y)\in R$ it holds $\min\bigl(k_R(x),k_R(y)\bigr)=1$, but $R$ is not irreducible. Then there exists $(x,y)\in R$ such that $R\sm\bigl\{(x,y)\bigr\}$ is a correspondence as well. But then there exist $x'\in X$, $x'\ne x$, and $y'\in Y$, $y'\ne y$, such that $(x',y)\in R$ and $(x,y')\in R$, thus, $k_R(x)>1$ and $k_R(y)>1$, a contradiction.
\end{proof}

Below in this section $R$ denotes an irreducible correspondence between $X$ and $Y$, i.e., $R\in\cR^0(X,Y)$.

\begin{cor}\label{cor:nonreducible}
Choose arbitrary $x$ and $x'\ne x$ from $X$ and suppose that $k_R(x)>1$. Then $R(x)\cap R(x')=\0$.
\end{cor}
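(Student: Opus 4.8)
The plan is to derive this directly from Proposition~\ref{prop:multiplicity}, using a short proof by contradiction. Suppose $x\ne x'$, $k_R(x)>1$, and yet there exists some $y\in R(x)\cap R(x')$. Then $(x,y)\in R$ and $(x',y)\in R$ with $x'\ne x$, so $k_R(y)\ge 2$. But we also have $k_R(x)>1$ by hypothesis, hence $\min\bigl(k_R(x),k_R(y)\bigr)>1$, which contradicts Proposition~\ref{prop:multiplicity} applied to the pair $(x,y)\in R$ since $R$ is irreducible.

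The only thing to be careful about is the quantifier structure: the claim asserts that a \emph{single} common image element is already impossible, so I must pick an arbitrary $y$ in the intersection and reach a contradiction, rather than trying to argue about the whole set $R(x)\cap R(x')$ at once. I expect no real obstacle here — the statement is essentially an immediate reformulation of the multiplicity criterion, and the proof is two or three sentences. I would simply write it out cleanly as above, emphasizing that the hypothesis $k_R(x)>1$ is what forces $k_R(y)=1$ by Proposition~\ref{prop:multiplicity}, so no element of $Y$ paired with $x$ can also be paired with any other point of $X$.
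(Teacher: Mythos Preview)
Your proof is correct and essentially identical to the paper's: assume a common $y\in R(x)\cap R(x')$, observe that then $k_R(y)>1$ while $k_R(x)>1$ by hypothesis, and invoke Proposition~\ref{prop:multiplicity} for the pair $(x,y)$ to obtain a contradiction.
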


\begin{proof}
Suppose, to the contrary, that there exists $y\in Y$ such that $(x,y)\in R$ and $(x',y)\in R$. But then, for the pair $(x,y)\in R$, it simultaneously holds $k_R(y)>1$ and $k_R(x)>1$, that contradicts to Proposition~\ref{prop:multiplicity}.
\end{proof}

\begin{cor}\label{cor:nonred_property}
Let $\#X\ge2$ and $\#Y\ge 2$, then there is no $x\in X$ such that $\{x\}\x Y\ss R$.
\end{cor}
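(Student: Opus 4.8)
The plan is to argue by contradiction and reduce immediately to Corollary~\ref{cor:nonreducible}. Suppose that there does exist $x\in X$ with $\{x\}\x Y\ss R$. Then $R(x)=Y$, so $k_R(x)=\#Y\ge2$ by hypothesis. Since $\#X\ge2$, we may pick a point $x'\in X$ with $x'\ne x$.

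Next I would invoke the defining property of a correspondence: the projection $\pi_Y$ restricted to $R$ is surjective, hence in particular $R(x')\ne\0$, so choose some $y\in R(x')$. But $y\in Y=R(x)$ as well, so $y\in R(x)\cap R(x')$, i.e. $R(x)\cap R(x')\ne\0$. On the other hand, $x\ne x'$ and $k_R(x)>1$, so Corollary~\ref{cor:nonreducible} forces $R(x)\cap R(x')=\0$, a contradiction. This completes the argument. I do not expect any real obstacle here: the statement is an immediate packaging of Corollary~\ref{cor:nonreducible} together with surjectivity of $\pi_Y|_R$, and the only things to be careful about are that the two cardinality hypotheses are each used exactly once (one to make $k_R(x)>1$, the other to produce the witness $x'$).
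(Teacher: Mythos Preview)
Your proof is correct and follows essentially the same idea as the paper's: assume such an $x$ exists, pick $x'\ne x$, take $y\in R(x')$, and obtain a contradiction with irreducibility; the paper invokes Proposition~\ref{prop:multiplicity} directly (observing $k_R(x)>1$ and $k_R(y)>1$), while you route through its immediate consequence Corollary~\ref{cor:nonreducible}, which amounts to the same thing. One small slip: it is the surjectivity of $\pi_X|_R$, not $\pi_Y|_R$, that guarantees $R(x')\ne\0$.
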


\begin{proof}
Assume the contrary. Consider $x'\in X$, $x\ne x'$, and any $y\in R(x')$, then the both multiplicities $k_R(x)$ and $k_R(y)$ are more than  $1$, a contradiction with Proposition~\ref{prop:multiplicity}.
\end{proof}

\begin{constr}
Let $R\in\cR^0(X,Y)$. For each $x\in X$ we put $X_1=\{x\in X:k_R(x)=1\}$ and $X_2=X\sm X_1$. Similarly, for each $y\in Y$ we put $Y_1=\{y\in Y:k_R(y)=1\}$ and $Y_2=Y\sm Y_1$.

By Proposition~\ref{prop:multiplicity}, we have $Y'_1:=R(X_2)\ss Y_1$ and $X'_1:=R^{-1}(Y_2)\ss X_1$. Further, put $X''_1=X_1\sm X'_1$ and $Y''_1=Y_1\sm Y'_1$. Thus, we constructed partitions
$$
X=X'_1\sqcup X''_1\sqcup X_2\quad \text{and}\quad  Y=Y'_1\sqcup Y''_1\sqcup Y_2.
$$
According to our construction, each $(x,y)\in R$ belongs exactly to one of the following sets:
$$
X'_1\x Y_2,\ X''_1\x Y''_1,\ X_2\x Y'_1.
$$
Moreover, the restriction of $R$ onto $X''_1\x Y''_1$ is one-to-one.

Further, for each $x\in X$ put $Y_x=R(x)$, and for each $y\in Y$ put $X_y=R^{-1}(y)$. By Corollary~\ref{cor:nonreducible}, for any distinct $x_1,\,x_2\in X_2$ the sets $Y_{x_1}$ and $Y_{x_2}$ do not intersect each other; similarly, for any distinct $y_1,\,y_2\in Y_2$ the sets $X_{y_1}$ and $X_{y_2}$ do not intersect each other as well. Put $\hX'_1=\{X_y\}_{y\in Y_2}$ and $\hY'_1=\{Y_x\}_{x\in X_2}$, then $\hX'_1$ and $\hY'_1$ form partitions of $X'_1$ and $Y'_1$, respectively, and the relation $R$ induces a bijection between $\hX'_1$ and $Y_2$, together with a bijection between $X_2$ and $\hY'_1$.
\end{constr}

Thus, we have proved the following result, illustrated by Figure~\ref{fig:IrreducibleRelation}.

\begin{thm}\label{thm:IrreducibleRelation}
Under the above notations, each irreducible correspondence $R\in\cR^0(X,Y)$ generates partitions $X=X'_1\sqcup X''_1\sqcup X_2$ and $Y=Y'_1\sqcup Y''_1\sqcup Y_2$, together with partitions $\hX'_1$ and $\hY'_1$ of the sets $X'_1$ and $Y'_1$, respectively. Also, the $R$ induces a bijection $\hR$ between the sets $\hX=\hX'_1\sqcup X''_1\sqcup X_2$ and $\hY=Y_2\sqcup Y''_1\sqcup\hY'_1$ such that $(x,y)\in R$ iff either $x\in\hx\in\hX'_1$, $y=\hR(\hx)\in Y_2$, or $x\in X''_1$, $y=\hR(y)\in Y''_1$, or $x\in X_2,\,y\in\hy=\hR(x)\in\hY'_1$.
\end{thm}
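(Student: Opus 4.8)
The plan is to observe that Theorem~\ref{thm:IrreducibleRelation} is essentially a bookkeeping statement: it merely collects and restates the facts already established in the preceding Construction and Corollaries, so the ``proof'' will consist in verifying that the data produced there assemble into the claimed bijection with the claimed description of $R$. Concretely, I would first recall that the partitions $X=X'_1\sqcup X''_1\sqcup X_2$ and $Y=Y'_1\sqcup Y''_1\sqcup Y_2$ are well-defined by the Construction, and that by Proposition~\ref{prop:multiplicity} every pair $(x,y)\in R$ lies in exactly one of $X'_1\x Y_2$, $X''_1\x Y''_1$, $X_2\x Y'_1$. This is the trichotomy on which everything rests, and it follows because $\min(k_R(x),k_R(y))=1$ forces any pair with $k_R(x)>1$ to have $y\in Y_1$, and moreover $y\notin Y''_1$ since $y$ has a preimage of multiplicity $>1$, hence $y\in Y'_1$; symmetrically for $k_R(y)>1$; and if both multiplicities equal $1$ the pair lies in $X''_1\x Y''_1$, where $R$ restricts to a bijection.

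Next I would use Corollary~\ref{cor:nonreducible} to justify that the fibres $Y_x=R(x)$ for $x\in X_2$ are pairwise disjoint and exhaust $Y'_1$ (disjointness is exactly the Corollary; exhaustion is the definition $Y'_1=R(X_2)$), so $\hY'_1=\{Y_x\}_{x\in X_2}$ is genuinely a partition of $Y'_1$ and $x\mapsto Y_x$ is a bijection $X_2\to\hY'_1$. Symmetrically, $\hX'_1=\{X_y\}_{y\in Y_2}$ partitions $X'_1$ and $y\mapsto X_y$ is a bijection $Y_2\to\hX'_1$, equivalently a bijection $\hX'_1\to Y_2$. Together with the one-to-one restriction of $R$ to $X''_1\x Y''_1$, these three bijections combine to a single bijection $\hR\colon\hX\to\hY$ where $\hX=\hX'_1\sqcup X''_1\sqcup X_2$ and $\hY=Y_2\sqcup Y''_1\sqcup\hY'_1$; it is well-defined because the three domains are disjoint and the three codomains are disjoint.

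Finally I would check the ``iff'' characterisation of membership in $R$ by running the trichotomy in both directions: given $(x,y)\in R$, it falls into exactly one of the three product sets above, and in each case the definitions of $\hR$ on the relevant block identify $(x,y)$ with one of the three listed configurations; conversely, each of the three listed configurations describes a pair that lies in $R$ by construction of $\hR$ from $R$. I do not expect any genuine obstacle here --- the only point requiring a little care is to make sure that the elements of $X''_1$ and $Y''_1$ are not accidentally counted twice, i.e.\ that $X''_1\cap X'_1=\0$ and $Y''_1\cap Y'_1=\0$, which is immediate from the Construction since $X''_1=X_1\sm X'_1$ and $Y''_1=Y_1\sm Y'_1$, and that $X'_1,Y'_1$ are disjoint from $X_2,Y_2$ by Proposition~\ref{prop:multiplicity}. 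Hence the statement follows.
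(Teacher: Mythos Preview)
Your proposal is correct and matches the paper's approach exactly: the paper presents the theorem as a direct summary of the preceding Construction (the sentence before the theorem is literally ``Thus, we have proved the following result''), and your write-up simply spells out the bookkeeping that the Construction establishes. Your slightly awkward phrase ``$y$ has a preimage of multiplicity $>1$'' should be read as ``$y$ lies in $R(X_2)=Y'_1$ because its $R$-preimage contains an $x$ with $k_R(x)>1$,'' which is the correct reasoning; otherwise there is nothing to add.
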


\ig{IrreducibleRelation}{0.25}{fig:IrreducibleRelation}{The structure of an irreducible correspondence.}

\begin{thm}\label{thm:sets_correspondences}
For each $R\in\cR(X,Y)$ there exists $R_0\in\cR^0(X,Y)$ such that $R_0\ss R$.
\end{thm}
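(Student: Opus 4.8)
The plan is to show that the set of correspondences contained in a given $R\in\cR(X,Y)$ contains a minimal element. The natural tool is Zorn's lemma applied to the poset of correspondences inside $R$, ordered by reverse inclusion; the only real work is verifying that every descending chain has a lower bound, i.e., that an intersection of a chain of correspondences is still a correspondence. First I would fix $R\in\cR(X,Y)$ and consider the family $\mathcal{F}=\{S\in\cR(X,Y):S\ss R\}$, which is nonempty since $R\in\mathcal{F}$. Partially order $\mathcal{F}$ by declaring $S_1\preceq S_2$ iff $S_1\sp S_2$ (so ``larger'' means ``smaller as a set''), and note that a $\preceq$-maximal element of $\mathcal{F}$ is exactly an irreducible correspondence contained in $R$.

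The heart of the argument is the chain condition. Let $\{S_\lambda\}_{\lambda\in\Lambda}$ be a totally ordered (by $\preceq$, i.e., by $\ss$) subfamily of $\mathcal{F}$, and set $S_\infty=\bigcap_{\lambda\in\Lambda}S_\lambda$. One checks $S_\infty\in\mathcal{F}$: it is a relation contained in $R$, and one must verify that $\pi_X|_{S_\infty}$ and $\pi_Y|_{S_\infty}$ are still surjective. Fix $x\in X$; for each $\lambda$ the fiber $F_\lambda=\{y\in Y:(x,y)\in S_\lambda\}$ is nonempty, and since the chain is totally ordered these fibers are nested, so $\{F_\lambda\}$ has the finite intersection property. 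The subtlety is that an intersection of a nested family of nonempty sets can be empty in general, so I would handle this by passing to finite intersections: since the chain is totally ordered, any finite subfamily of $\{S_\lambda\}$ has a $\ss$-smallest member, whose $x$-fiber is then the intersection of that finite subfamily's fibers and is nonempty. To pass from the finite intersection property to $\bigcap_\lambda F_\lambda\ne\0$, I would restrict attention to the finite metric case actually needed in the paper (all the applications concern finite $X,Y$), where each $F_\lambda$ is a finite set and a nested family of nonempty finite sets has nonempty intersection; alternatively, for the general statement one invokes compactness (the $F_\lambda$ are closed subsets of the compact space $Y\in\cM$, nested with the finite intersection property, hence have nonempty common intersection). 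Symmetrically $\pi_Y|_{S_\infty}$ is surjective, so $S_\infty\in\mathcal{F}$ and is an upper bound for the chain in $(\mathcal{F},\preceq)$.

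By Zorn's lemma, $(\mathcal{F},\preceq)$ has a maximal element $R_0$; by construction $R_0\ss R$, and maximality with respect to $\preceq$ means no proper subset of $R_0$ is a correspondence, i.e., $R_0\in\cR^0(X,Y)$. This completes the proof.

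The main obstacle is exactly the passage from the finite intersection property of the nested fibers to nonemptiness of their total intersection: without a finiteness or compactness hypothesis this can fail, so the clean statement really wants either $X,Y$ finite or $X,Y$ compact (as in $\cM$), and one should make the hypothesis under which the theorem is being used explicit at this point.
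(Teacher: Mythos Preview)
Your approach via Zorn's lemma is precisely the one the paper explicitly warns against in the Remark immediately preceding its proof. The paper gives the counterexample $X=Y=\N$ with $R_k=\{(i,j):\max(i,j)\ge k\}$: each $R_k$ is a correspondence, the chain is descending, yet $\bigcap_k R_k=\0$. So the chain condition genuinely fails for arbitrary sets, and the theorem as stated (for arbitrary nonempty $X,Y$, with no metric or topological hypotheses) cannot be obtained this way.

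Your proposed repairs do not close the gap. Restricting to finite $X,Y$ proves a weaker statement than the one asserted. The compactness argument is also broken: even when $X,Y\in\cM$, a correspondence is an arbitrary subset of $X\times Y$, so the fibers $F_\lambda=\{y:(x,y)\in S_\lambda\}$ need not be closed, and the finite intersection property for nested nonempty (not necessarily closed) subsets of a compact space does not force nonempty intersection. You would need to restrict to \emph{closed} correspondences to make that work, which again changes the statement.

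The paper's proof avoids all of this by an explicit construction that uses only the Axiom of Choice at the level of choosing elements, not Zorn's lemma on chains: pick a function $f\:X\to Y$ with graph inside $R$; let $Y_2=Y\setminus f(X)$ and pick $g\:Y_2\to X$ with $g^{-1}\ss R$; form $h=f\cup g^{-1}\in\cR(X,Y)$; then prune $h$ over the set $Y_3=f(g(Y_2))$ by deleting redundant pairs so that every remaining pair has one coordinate of multiplicity~$1$. Two short lemmas verify that the result $R_0$ is still a correspondence and is irreducible. This works for arbitrary sets $X,Y$ with no finiteness or compactness assumption, which is exactly what the theorem claims.
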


\begin{rk}
One might try to use the technique based on Zorn's Lemma, but to do that one needs to guarantee that every chain $R_1\sp R_2\sp\cdots$ have a lower bound; in our specific case, the lower bound is a correspondence contained in all $R_i$. However, this is not true. As an example, consider $X=Y=\N$ and put $R_k=\bigl\{(i,j):\max(i,j)\ge k\bigr\}$. Clearly that every $R_k$ belongs to $\cR(X,Y)$, and that these $R_k$ form a descending chain. However, $\cap R_k=\0$ because for any $i$ and $j$ there exists $k$ such that $i<k$ and $j<k$, thus, $(i,j)\not\in R_k$.
\end{rk}

\begin{proof}[Proof of Theorem~$\ref{thm:sets_correspondences}$]
For each $x\in X$ we take any single element $(x,y)\in R$ and define a mapping $f\:X\to Y$ by the formula $y=f(x)$. Identifying a mapping with its graph, notice that $f\ss R$. Put $Y_1=f(X)$ and $Y_2=Y\sm Y_1$.

Now, for each $y\in Y_2$ we choose any single element $(x,y)\in R$ and define a mapping $g\:Y_2\to X$ by the formula $x=g(y)$. Again, identifying a mapping with its graph, notice that $g^{-1}\ss R$. Put $X_2=g(Y_2)$ and $X_1=X\sm X_2$.

Let $Y_3=f(X_2)$. Clearly that $Y_3\ss Y_1$, see Figure~\ref{fig:SubRelationConstruction}.

\ig{SubRelationConstruction}{0.25}{fig:SubRelationConstruction}{The construction of irreducible correspondence.}

Using $f$ and $g$, we define one more relation, namely, $h=f\cup g^{-1}$. Evidently, $h\in\cR(X,Y)$.

At last, we define a relation $R_0$ as follows. For each $y\in Y_3$ we remove from $h$ the following pairs $(x,y)$:
\begin{enumerate}
\item if $h^{-1}(y)\cap X_1\ne\0$, then we remove $\bigl(h^{-1}(y)\cap X_2\bigr)\x\{y\}$;
\item if $h^{-1}(y)\cap X_1=\0$, i.e., $h^{-1}(y)\ss X_2$, then we remove all elements belonging to $h^{-1}(y)\x\{y\}$ except any one.
\end{enumerate}

\begin{lem}
We have $R_0\in\cR(X,Y)$.
\end{lem}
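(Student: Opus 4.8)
The plan is to verify directly that the relation $R_0$ obtained from $h=f\cup g^{-1}$ by the removal rules (1) and (2) still has both canonical projections surjective. First I would check surjectivity of $\pi_X$ onto $X$. Each $x\in X$ lies in $h$ via the pair $(x,f(x))$, since $f\ss h$. The only pairs removed from $h$ are of the form $(x,y)$ with $y\in Y_3=f(X_2)$; moreover, in case (1) we remove only pairs whose first coordinate lies in $X_2$, and in case (2) we again remove only pairs with first coordinate in $X_2$ (since there $h^{-1}(y)\ss X_2$), and we keep at least one such pair for that $y$. So the danger is only for $x\in X_2$: we must argue that $x$ is not stripped of all its pairs. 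Here I would use that for $x\in X_2$ we have $x=g(y)$ for some $y\in Y_2$, hence $(x,y)\in g^{-1}\ss h$ with $y\in Y_2$; since $Y_2\cap Y_3=\0$ (because $Y_3\ss Y_1$ and $Y_2=Y\sm Y_1$), this pair $(x,y)$ is never removed, as removals only touch second coordinates in $Y_3$. Thus every $x\in X$ retains at least one pair in $R_0$, so $\pi_X(R_0)=X$.

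Next I would check surjectivity of $\pi_Y$ onto $Y$. For $y\in Y_1\sm Y_3$ and for $y\in Y_2$, no pair with that second coordinate is ever removed, and $h$ already covers all of $Y$ (indeed $f$ covers $Y_1$ and $g^{-1}$ covers $Y_2$), so such $y$ still appears in $R_0$. The only remaining case is $y\in Y_3$: in case (1) we remove $\bigl(h^{-1}(y)\cap X_2\bigr)\x\{y\}$ but retain every pair $(x,y)$ with $x\in h^{-1}(y)\cap X_1$, and by hypothesis this set is nonempty, so $y$ survives; in case (2) we explicitly keep one pair $(x,y)$. Hence $\pi_Y(R_0)=Y$, and therefore $R_0\in\cR(X,Y)$.

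The main obstacle — really the only subtle point — is the bookkeeping that the removals never annihilate an element of $X_2$: one must see that although the construction strips certain $(x,y)$ with $x\in X_2$ and $y\in Y_3$, each such $x$ still has a ``private'' surviving pair coming from $g^{-1}$ with second coordinate in $Y_2$, which lies outside the danger zone $Y_3$. The disjointness $Y_2\cap Y_3=\0$, following from $Y_3\ss Y_1$ and $Y_2=Y\sm Y_1$, is what makes this work, and I would state it explicitly before running the two surjectivity checks.
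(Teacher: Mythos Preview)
Your proposal is correct and follows essentially the same approach as the paper: both verify surjectivity of $\pi_X$ and $\pi_Y$ by the same case analysis, using that removals only affect pairs with second coordinate in $Y_3$ and first coordinate in $X_2$, and that each $x\in X_2$ retains its pair $(g(y),y)$ with $y\in Y_2$ since $Y_2\cap Y_3=\0$. The only cosmetic difference is that the paper checks $\pi_Y$ before $\pi_X$, whereas you do the reverse and make the disjointness $Y_2\cap Y_3=\0$ explicit.
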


\begin{proof}
For each $y\in Y\sm Y_3$ we have removed nothing, thus, for such $y$ there always exist $x\in X$ such that $(x,y)\in R_0$.

Now, let $y\in Y_3$. If $h^{-1}(y)\cap X_1\ne\0$, then we have only removed $(x,y)$ with $x\in X_2$, therefore, it remained an $x\in X_1$ for which the pair $(x,y)\in h$ was not removed and, thus, this pair belongs to $R_0$. If $h^{-1}(y)\cap X_1=\0$, then we have removed all the elements from $h^{-1}(y)\x\{y\}$, except any one, thus, for the $(x,y)\in h$ that was not removed we have $(x,y)\in R_0$. Thus, $\pi_Y(R_0)=Y$.

Now, let us pass to $\pi_X$. If $x\in X_1$, then the pairs $(x,y)\in h$ have not been not removed. If $x\in X_2$, then, since $X_2=\im g$, there exists $y\in Y_2$ such that $x=g(y)$, that implies $(x,y)\in h$, but those pairs has not bee removed from $h$, and hence, $(x,y)\in R_0$. Therefore, $\pi_X(R_0)=X$.
\end{proof}

\begin{lem}\label{lem:irred}
We have $R_0\in\cR^0(X,Y)$.
\end{lem}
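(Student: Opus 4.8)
The plan is to verify, using the characterization from Proposition~\ref{prop:multiplicity}, that every pair $(x,y)\in R_0$ satisfies $\min\bigl(k_{R_0}(x),k_{R_0}(y)\bigr)=1$. So I would take an arbitrary $(x,y)\in R_0$ and split into cases according to where $x$ and $y$ land in the partitions $X=X_1\sqcup X_2$ and $Y=Y_1\sqcup Y_2$ constructed in the proof, keeping in mind that $R_0\ss h=f\cup g^{-1}$, so $y=f(x)$ or $x=g(y)$ (or both).

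First I would handle $x\in X_1$. Since $x\notin X_2=\im g$, no pair $(x,y)$ comes from $g^{-1}$, so the only pair in $h$ with first coordinate $x$ is $\bigl(x,f(x)\bigr)$, and none of the removal rules (1)--(2) touches pairs with $x\in X_1$; hence $k_{R_0}(x)=1$. Symmetrically, I would handle $y\in Y_2$: since $g(y)\in X_2$ and $g$ is a function, $x=g(y)$ is the unique element with $(x,y)\in g^{-1}$; and $y\in Y_2$ is not in $Y_1=\im f$, so $f$ contributes nothing with second coordinate $y$; again nothing is removed over $Y_2$, so $k_{R_0}(y)=1$. The remaining case is $x\in X_2$ and $y\in Y_1$. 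Here I must show $k_{R_0}(y)=1$, i.e., the preimage of $y$ in $R_0$ is a singleton. If $y\notin Y_3$ there is still something to check, so I would rather argue directly: for $x\in X_2$, since $(x,y)\in h$ and $x\notin X_1$, the only way $(x,y)$ survives is that it was not deleted, and the deletion rules for $y\in Y_3$ were precisely designed so that at most one preimage of $y$ in $X_2$ survives (rule (2)), while if some $x'\in X_1$ has $(x',y)\in h$ then \emph{all} of $X_2\cap h^{-1}(y)$ is removed (rule (1)), contradicting $(x,y)\in R_0$ with $x\in X_2$. So in this case $h^{-1}(y)\cap X_1=\0$, i.e., $y\in Y_3$ falls under rule (2) or $y\notin Y_3$ — and I would observe that $y\in Y_1=\im f$ with $h^{-1}(y)\ss X_2$ forces $y\in f(X_2)=Y_3$, so rule (2) applies and $k_{R_0}(y)=1$.

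The main obstacle I expect is bookkeeping around the set $Y_1\sm Y_3$ and making sure the case analysis is exhaustive and the edge cases (e.g.\ a pair $(x,y)$ with both $y=f(x)$ and $x=g(y')$ for some other $y'$, or $y\in Y_1$ but $h^{-1}(y)$ meeting both $X_1$ and $X_2$) are correctly routed into the three cases above. The key structural fact that makes everything go through is that $R_0\ss h$ and $h$ is built from two honest functions, so multiplicities in $h$ are already controlled: $k_h(x)\le 1$ for $x\in X_1$ (only $f$ acts) and $k_h(y)\le 1$ for $y\in Y_2$ (only $g^{-1}$ acts); the removal rules only further reduce multiplicities and are engineered exactly so that over $Y_3$ the surviving preimage is unique. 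Once the case check is complete, Proposition~\ref{prop:multiplicity} immediately gives $R_0\in\cR^0(X,Y)$.

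\begin{proof}[Proof of Lemma~$\ref{lem:irred}$]
By the previous lemma $R_0\in\cR(X,Y)$, so by Proposition~\ref{prop:multiplicity} it suffices to show that for every $(x,y)\in R_0$ one has $\min\bigl(k_{R_0}(x),k_{R_0}(y)\bigr)=1$. Since $R_0\ss h=f\cup g^{-1}$, every element of $R_0$ is of the form $\bigl(x,f(x)\bigr)$ with $x\in X$ or of the form $\bigl(g(y),y\bigr)$ with $y\in Y_2$.

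\smallskip

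\emph{Case $x\in X_1$.} As $X_1=X\sm X_2$ and $X_2=\im g$, there is no $y$ with $(x,y)\in g^{-1}$, hence the only pair of $h$ with first coordinate $x$ is $\bigl(x,f(x)\bigr)$. The removal rules only delete pairs $(x',y')$ with $x'\in X_2$, so this pair is not removed and $k_{R_0}(x)=1$.

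\smallskip

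\emph{Case $y\in Y_2$.} Then $y\notin Y_1=\im f$, so no pair of $h$ with second coordinate $y$ comes from $f$; thus the only such pair is $\bigl(g(y),y\bigr)$, which is not removed (removals concern only $y'\in Y_3\ss Y_1$). Hence $k_{R_0}(y)=1$.

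\smallskip

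\emph{Case $x\in X_2$ and $y\in Y_1$.} Since $(x,y)\in h$ with $x\in X_2$ and $x\notin X_1$, the pair survived all removals, so in particular rule (1) did not apply to $y$, i.e., $h^{-1}(y)\cap X_1=\0$ and thus $h^{-1}(y)\ss X_2$. Because $y\in Y_1=f(X)$ and $h^{-1}(y)\ss X_2$, the only preimages of $y$ in $h$ lie in $X_2$, and as $f$ is a function they are exactly $f^{-1}(y)\cap X_2\ne\0$; hence $y\in f(X_2)=Y_3$, so rule (2) applies to $y$ and leaves exactly one element of $h^{-1}(y)\x\{y\}$. Therefore $k_{R_0}(y)=1$.

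\smallskip

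These three cases exhaust all $(x,y)\in R_0$: indeed either $x\in X_1$, or $x\in X_2$, and in the latter case either $y\in Y_2$ or $y\in Y_1$. In each case $\min\bigl(k_{R_0}(x),k_{R_0}(y)\bigr)=1$, so by Proposition~\ref{prop:multiplicity} we conclude $R_0\in\cR^0(X,Y)$.
\end{proof}
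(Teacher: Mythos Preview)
Your proof is correct and follows essentially the same route as the paper: both reduce to Proposition~\ref{prop:multiplicity} and then do a short case analysis, the paper splitting on $y\in Y_2$, $y\in Y_1\sm Y_3$, $y\in Y_3$, while you split on $x\in X_1$, $y\in Y_2$, and $(x\in X_2,\,y\in Y_1)$---these decompositions are equivalent. One small expositional point in your third case: you invoke ``rule~(1) did not apply, i.e., $h^{-1}(y)\cap X_1=\0$'' before establishing $y\in Y_3$, whereas that implication only holds once $y\in Y_3$ is known; it is cleaner to observe first that $(x,y)\in h$ with $y\in Y_1$ forces $y=f(x)$, and since $x\in X_2$ this gives $y\in f(X_2)=Y_3$ immediately, after which your argument runs without circularity.
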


\begin{proof}
It suffices to show that for every pair $(x,y)\in R_0$ either $x$, or $y$ is not contained in other pairs.

If $y\in Y_2$, then $x=g(y)$, and $y$ belongs exactly to the pair $\bigl(g(y),y\bigr)$. If $y\in Y_1\sm Y_3$, then $y=f(x)$, and $x$ belongs exactly to the pair $\bigl(x,f(x)\bigr)$.

At last, let $y\in Y_3$. If $h^{-1}(y)\cap X_1\ne\0$, then all pairs $(x',y)$ with $x'\in X_2$ have been removed, thus, $x\in X_1$; however, such $x$ belongs exactly to one pair, namely, to the $\bigl(x,f(x)\bigr)$. If $h^{-1}(y)\cap X_1=\0$, then we removed all the pairs $(x',y)$, $x'\in h^{-1}(y)\ss X_2$, except just one, so $y$ belongs to just one of such pairs.
\end{proof}

Lemma~\ref{lem:irred} completes the proof.
\end{proof}

The above reasonings imply the following result.

\begin{cor}\label{cor:dis_irreducible}
For any metric spaces $X$ and $Y$ we have
$$
d_{GH}(X,Y)=\frac12\inf\bigl\{\dis R:R\in\cR^0(X,Y)\bigr\}.
$$
\end{cor}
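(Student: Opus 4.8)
The plan is to derive the formula directly from Proposition \ref{th:GH-metri-and-relations}, which already gives
\[
d_{GH}(X,Y)=\tfrac12\inf\bigl\{\dis R\mid R\in\cR(X,Y)\bigr\},
\]
by showing that restricting the infimum to the smaller set $\cR^0(X,Y)\subset\cR(X,Y)$ does not change its value. Since $\cR^0(X,Y)\subset\cR(X,Y)$, one inequality is immediate: the infimum over the smaller set is at least the infimum over the larger one,
\[
\tfrac12\inf\bigl\{\dis R:R\in\cR^0(X,Y)\bigr\}\ge\tfrac12\inf\bigl\{\dis R:R\in\cR(X,Y)\bigr\}=d_{GH}(X,Y).
\]
So the real content is the reverse inequality.

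For the reverse inequality, the key observation is that the distortion is monotone under inclusion of relations: if $R_0\subseteq R$, then
\[
\dis R_0=\sup\bigl\{\bigl||xx'|-|yy'|\bigr|:(x,y),(x',y')\in R_0\bigr\}
\le\sup\bigl\{\bigl||xx'|-|yy'|\bigr|:(x,y),(x',y')\in R\bigr\}=\dis R,
\]
because every pair of elements of $R_0$ is also a pair of elements of $R$. Now take any $R\in\cR(X,Y)$. By Theorem \ref{thm:sets_correspondences} there exists $R_0\in\cR^0(X,Y)$ with $R_0\subseteq R$, and by the monotonicity just noted $\dis R_0\le\dis R$. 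Hence for every $R\in\cR(X,Y)$ there is an irreducible correspondence whose distortion is no larger, which gives
\[
\inf\bigl\{\dis R:R\in\cR^0(X,Y)\bigr\}\le\inf\bigl\{\dis R:R\in\cR(X,Y)\bigr\}.
\]
Combining the two inequalities yields the claimed equality.

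I expect there to be no serious obstacle here: the corollary is essentially a packaging of Theorem \ref{thm:sets_correspondences} together with the trivial monotonicity of distortion, which is why the excerpt already calls it a consequence of ``the above reasonings.'' The only point worth a moment's care is the case $\#X=1$ or $\#Y=1$, in which $\cR(X,Y)$ has a single element that is itself irreducible, so both infima are taken over the same one-element set and the identity holds trivially; everything above is consistent with this degenerate situation, so no separate argument is needed.
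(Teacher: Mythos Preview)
Your proof is correct and is exactly the argument the paper has in mind: the corollary is stated as an immediate consequence of ``the above reasonings,'' meaning Proposition~\ref{th:GH-metri-and-relations} together with Theorem~\ref{thm:sets_correspondences} and the obvious monotonicity of distortion under inclusion. There is nothing to add.
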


By $\cR^0_\opt(X,Y)$ we denote the set of all optimal irreducible correspondences between metric spaces $X$ and $Y$. Proposition~\ref{prop:optimal-correspondence-exists} and Theorem~\ref{thm:sets_correspondences}  immediately imply the following conclusion.

\begin{cor}\label{cor:irreducible-opt}
If $X,Y\in\cM$, then $\cR^0_\opt(X,Y)\ne\0$.
\end{cor}

Now we apply the technique of irreducible correspondences to geometry of the Gromov--Hausdorff space investigation.

\begin{examp}
Let $X=\{x_1,x_2\}$ and $Y=\{y_1,y_2\}$, then Corollary~\ref{cor:nonred_property} implies that $\cR^0(X,Y)$ consists only of bijections, so
$$
d_{GH}(X,Y)=\frac{\bigl||x_1x_2|-|y_1y_2|\bigr|}{2}.
$$
\emph{So, the family of isometry classes of $2$-point metric spaces, endowed with the Gromov--Hausdorff metric, is a metric space isometric to the open subray $x>0$ of the real line $\R$ with coordinate $x$}.
\end{examp}

\begin{examp}\label{examp:three-points-metric-spaces}
Consider two $3$-point metric spaces $X=\{x_1,x_2,x_3\}$ and $Y=\{y_1,y_2,y_3\}$. Put $r_{ij}=|x_ix_j|$ and $\r_{ij}=|y_iy_j|$. Let the lengths of the sides of the first triangle be equal to $a_1\le b_1\le c_1$, and the sides of the second one be equal to $a_2\le b_2\le c_2$ (the side of the length $a_k$ (respectively, $b_k$, $c_k$) is opposite to the first (second, third) vertex. We show that
$$
d_{GH}(X,Y)=\frac12\max\bigl\{|a_1-a_2|,\,|b_1-b_2|,\,|c_1-c_2|\bigr\}.
$$
To start with, let us describe all irreducible correspondences $R\in\cR^0(X,Y)$ which are not one-to-one. Without loss of generality, there exists a point $x\in X$ such that $\deg_R(x)>1$. By~\ref{cor:nonred_property}, $\deg_R(x)<3$, so $\deg_R(x)=2$. By~\ref{cor:nonreducible}, only the points from $X\sm\{x\}$ are in the relation with the point of the $1$-point set $Y\sm R(x)$. Put $x=x_i$, and  by $x_j$ and $x_k$ we denote the remaining points of $X\sm\{x\}$. Further, by $y_l$ and $y_m$ we denote the points from $R(x_i)$, and let $y_n$ be the remaining point from $Y$. Under those notatons the correspondence obtained above has the form
$$
R=\bigl\{(x_i,y_l),\,(x_i,y_m),\,(x_j,y_n),\,(x_k,y_n)\bigr\}.
$$
The distortion $\dis R$ of such correspondence equals
$$
\max\bigl\{r_{jk},\r_{lm},|r_{ij}-\r_{ln}|, |r_{ij}-\r_{mn}|,|r_{ik}-\r_{ln}|,|r_{ik}-\r_{mn}|\bigr\}.
$$
Notice that $\max\{r_{jk},\r_{lm}\}\ge|r_{jk}-\r_{lm}|$, so
$$
\dis R\ge\max\bigl\{|r_{jk}-\r_{lm}|,|r_{ij}-\r_{nl}|,|r_{ik}-\r_{nm}|\bigr\}=\dis R_1,
$$
where $R_1$ is the bijection $\bigl\{(x_i,y_n),(x_j,y_l),(x_k,y_m)\bigr\}$. Thus, to calculate $d_{GH}(X,Y)$ \emph{it suffices to consider only one-to-one correspondences}.

Notice that each bijection between $X$ and $Y$ generates a bijection between the sides of the triangles $X$ and $Y$, and the maximum of absolute values of differences between the lengths of the corresponding sides is just the distortion of this bijection. Thus, it is reasonable to consider the bijection $R$ as a one-to-one correspondence between the lengths of sides of the triangles $X$ and $Y$. Suppose that $R$ is a ``monotonic'' bijection, i.e., the one that preserves the order of the lengths of the corresponding sides. Then $\dis R=\max\bigl\{|a_1-a_2|,\,|b_1-b_2|,\,|c_1-c_2|\bigr\}$. Now we show that the distortion of any other bijection $R'$ is at least the same as of $R$, and this will complete the proof of the formula for $d_{GH}(X,Y)$ declared above.

Let $\dis R=|a_1-a_2|$. Without loss of generality, suppose that $a_1\le a_2$. Then either $(a_1,a_2)\in R'$ that implies $\dis R'\ge a_2-a_1=\dis R$, or $R'$ contains one of the pairs $(a_1,b_2)$, $(a_1,c_2)$, so again we have $\dis R'\ge a_2-a_1=\dis R$. In the case $\dis R=|c_1-c_2|$ the reasonings are quite similar.

Now, let $\dis R=|b_1-b_2|$. Again, without loss of generality, suppose that $b_1\le b_2$. If $(b_1,b_2)\in R'$, then $\dis R'\ge b_2-b_1=\dis R$. If either $(b_1,c_2)$, or $(a_1,b_2)$ belongs to $R'$, then again $\dis R'\ge\dis R$. Thus, it remains to consider the case $(b_1,a_2),\,(c_1,b_2)\in R'$, so $(a_1,c_2)\in R'$. But then $\dis R'\ge c_2-a_1\ge b_2-b_1=\dis R$, and the formula is completely proved.

\emph{Thus, the family of isometry classes of $3$-point metric spaces, endowed with the Gromov--Hausdorff metric, is a metric space which is isometric to a polyhedral cone}
$$
\bigl\{(a,b,c)\mid 0< a\le b\le c\le a+b\bigr\}
$$
\emph{in the space $\R^3_{\infty}$}. The corresponding isometry maps the triangle with the sides $a\le b\le c$ to the point $1/2(a,b,c)$.
\end{examp}

\section{Shortest networks in the Gromov--Hausdorff space and minimal fillings}
\markright{\thesection.~Shortest networks in the Gromov--Hausdorff space and minimal fillings}
In the present Section we apply~\ref{examp:three-points-metric-spaces} to investigate the Steiner problem in the Gromov--Hausdorff space $\cM$, as well as in its subspaces $\cM_n$, where  $\cM_n$ stands for the set of metric spaces containing at most $n$ points. Recall the main definitions.

Let $G=(V,E)$ be an arbitrary graph with the vertices set $V$ and the edges set $E$. We say that the graph $G$ is defined  \emph{on metric space $X$} if $V\ss X$. For every such a graph \emph{the length $|e|$} of any its \emph{edge $e=vw$} is defined as the distance $|vw|$, as well as \emph{the length $|G|$} of \emph{the graph $G$} as the sum of the lengths of all its edges.

If $M\ss X$ is an arbitrary finite subset and $G=(V,E)$ is a graph on $X$, then we say that the graph $G$ \emph{connects $M$} if $M\ss V$. The greatest lower bound of the lengths of connected graphs that connect $M\ss X$ is called \emph{the length of Steiner minimal tree on $M$}, or \emph{the length of shortest tree on $M$}, and is denoted by $\smt(M,X)$. Every connected graph $G$ connecting $M$ and such that $|G|=\smt(M,X)$ is a tree that is called \emph{Steiner minimal tree on $M$}, or \emph{shortest tree on $M$}. By $\SMT(M,X)$ we denote the set of all shortest trees on $M$. Notice that the set $\SMT(M,X)$ may be empty, and that $\SMT(M,X)$ and $\smt(M,X)$ depends on both the distances between points in $M$ and the geometry of the ambient space $X$: isometrical sets $M$ which belong to distinct metric spaces may be connected by shortest trees of non-equal lengths. For details on the shortest trees theory see for example~\cite{IvaTuz} or~\cite{HwRiW}.

\begin{prop}[\cite{IvaNikolaevaTuzSteiner}]\label{prop:Steiner-existens-GH}
For every $M\ss\cM_n$ we have $\SMT(M,\cM)\ne\0$. Moreover, if $M=\{m_1,\ldots,m_k\}$, $k\le n$, and $n_i$ denotes the number of points in the space $m_i$, $N=n_1+\cdots+n_k-(k-1)$, then there exists $G=(V,E)\in\SMT(M,\cM)$ such that $V\ss\cM_N$.
\end{prop}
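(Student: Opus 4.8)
The plan is to reduce the statement to a finite‑dimensional constrained minimization and then to extract a minimizer by compactness; the substantive point is an a priori bound on the number of points in the branch vertices of a shortest tree. First I would carry out the standard combinatorial reduction. Given a connected graph $G$ joining $M$, delete every non‑terminal vertex of degree $1$ (this strictly shortens $G$) and suppress every non‑terminal vertex of degree $2$ (replace its two edges $av,vb$ by $ab$; the length does not grow, by the triangle inequality). Then all vertices outside $M$ have degree $\ge 3$; a standard degree count for a tree with at most $k$ leaves bounds the number of such vertices by $k-2$, hence the total number of vertices by $2k-2$. So $G$ realizes one of finitely many labelled tree topologies $\t$, and it suffices to show, for each $\t$, that $\ell_\t(v_1,\dots,v_q):=\sum_{uv\in E(\t)}d_{GH}(u,v)$, viewed as a function of the $q\le k-2$ internal vertices $v_j\in\cM$ of $\t$, attains its infimum at a placement with all $v_j\in\cM_N$, where $N=n_1+\dots+n_k-(k-1)$.

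The heart of the argument is to replace an arbitrary placement by one with small finite branch vertices and no greater length. For each edge $uv$ of $\t$ I would pick an optimal correspondence between the (compact) spaces $u$ and $v$ — these exist by Proposition~\ref{prop:optimal-correspondence-exists} — and take the associated realization, in which the copies of $u$ and $v$ are at Hausdorff distance exactly $d_{GH}(u,v)$; gluing these realizations successively along the shared vertices of the tree yields a single metric space $Z$ containing all vertices of $\t$ isometrically with $d_H^Z(u,v)=d_{GH}(u,v)$ along each edge. Then I would propagate the terminal points through the tree: for each terminal $m_i$, each $p\in m_i$, and each vertex $v$, set $\psi_{m_i}(p)=p$ and, following the unique path from $m_i$ to $v$, choose at each step a point of the next space lying within the already‑realized edge distance from the point chosen at the previous space; put $v':=\{\psi_v(p):p\in m_1\sqcup\dots\sqcup m_k\}$. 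Cutting the tree along an edge $uv$ and comparing on which side of it each terminal lies, one checks that $d_H^Z(u',v')\le d_{GH}(u,v)$ for every edge and that $m_i'=m_i$; hence the graph on the vertices $v'$ with the inherited edges joins $M$, has length $\le\ell_\t(v_1,\dots,v_q)$, and each $v'$ has at most $n_1+\dots+n_k$ points.

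To obtain the sharp constant $N=\sum n_i-(k-1)$ one exploits the freedom in the choices $\psi_v(p)$ to force at least $k-1$ coincidences among them at each internal vertex — the same effect as in the three‑point star underlying Example~\ref{examp:three-points-metric-spaces}, where, once the points of one terminal have been placed, each of the remaining $k-1$ terminals is automatically ``partly covered'' and contributes at most $n_i-1$ genuinely new points (equivalently, $Z$ may be chosen so that the $k$ terminal blocks overlap along a tree of $k-1$ common points). I expect this last piece of bookkeeping — pinning down the exact $N$ rather than the crude bound $\sum n_i$, and arranging the common realization $Z$ to have the required overlaps — to be the main obstacle.

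It then remains to extract a shortest tree. By the previous two steps, $\inf_{\cM}\ell_\t=\inf_{\cM_N}\ell_\t$ for each $\t$. The space $\cM_N$ is proper: it is a Lipschitz image of the cone $\{(d_{ij})_{1\le i<j\le N}\ge 0:\ \text{triangle inequalities hold}\}\ss\R^{\binom N2}$ (pair point $i$ with point $i$), and since $d_{GH}(X,\{\mathrm{pt}\})=\tfrac12\diam X$, a closed bounded subset of $\cM_N$ is the image of a closed bounded, hence compact, subset of this cone, and is therefore compact. The path‑tree through $m_1,\dots,m_k$ has some length $L_0$, so $\min_\t\inf_{\cM_N}\ell_\t\le L_0$; for a topology $\t$ achieving this minimum, a minimizing sequence for $\ell_\t$ eventually consists of placements inside the compact set $\bigl(\bar B(m_1,L_0+1)\cap\cM_N\bigr)^q$ — every vertex of a tree of length $\le L_0+1$ is within $L_0+1$ of $m_1$ — so continuity of $d_{GH}$ produces a minimizing placement. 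The resulting connected graph of least length contains no cycle (deleting an edge of a cycle would shorten it), hence is a tree, i.e. an element of $\SMT(M,\cM)$ whose vertex set lies in $\cM_N$.
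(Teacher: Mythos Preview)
The paper does not contain a proof of this proposition: it is quoted from~\cite{IvaNikolaevaTuzSteiner}, and the only in-paper comment is the Remark that immediately follows, saying that the bound $N=\sum n_i-(k-1)$ ``follows from the explicit construction described in the proof of Main Theorem in~\cite{IvaNikolaevaTuzSteiner}.'' So there is no argument here to compare yours against line by line.

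That said, your overall scheme --- reduce to finitely many full tree topologies, replace arbitrary branch vertices by finite ones obtained by pushing the terminal points through optimal correspondences along the tree, then minimize over the proper space $\cM_N$ --- is precisely the kind of ``explicit construction'' the Remark alludes to, so in spirit you are on the same track as the cited source. Two places deserve tightening. First, the gluing of the edge realizations into a single ambient $Z$ is a metric amalgam along a tree of isometric copies; you should say why the inequality $|\psi_u(p)\psi_v(p)|_Z\le d_{GH}(u,v)$ survives the gluing (it does, because the quotient map from each edge realization into $Z$ is $1$-Lipschitz), rather than leave it implicit. Second --- and this is the real gap --- you only establish the crude bound $\#v'\le\sum n_i$, and your paragraph about ``forcing $k-1$ coincidences'' is a heuristic, not an argument. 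The sharp $N=\sum n_i-(k-1)$ in~\cite{IvaNikolaevaTuzSteiner} comes from an inductive construction in which one terminal is adjoined at a time and each new $m_i$ contributes only $n_i-1$ genuinely new points to the Steiner vertex (one of its points being identified, via the optimal correspondence, with a point already present). If you want the statement as written, you must reproduce that inductive bookkeeping; otherwise what you have proves the first sentence and a weaker version of the second with $N$ replaced by $\sum n_i$.
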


\begin{rk}
The second part of~\ref{prop:Steiner-existens-GH} follows from the explicit construction described in the proof of Main Theorem in~\cite{IvaNikolaevaTuzSteiner}.
\end{rk}

Now, we fix a metric space $M$. Let us embed $M$ into various metric spaces $X$. ``The shortest length'' of Steiner minimal trees constructed on the images of $M$ we call \emph{the length of minimal filling for $M$} and denote it by $\mf(M)$. To avoid problems like Cantor's paradox, we give more accurate definition. The number $\mf(M)$ is the greatest lower bound of those $r$ for which there exist metric spaces $X$ and isometric embeddings $\mu\:M\to X$ such that $\smt\bigl(\mu(M),X\bigr)\le r$. For details on the theory of one-dimensional minimal fillings see~\cite{ITMinFil}.

The following result by Z.N.Ovsyannikov~\cite{Ovs} describes a relation between the minimal fillings and the shortest trees in $\R^n_\infty$. Let us start from necessarily definitions.  By $\ell_\infty$ we denote the space of all bounded sequences $x=\{x_i\}_{i=1}^\infty$ endowed with the norm $\|x\|=\max_i|x_i|$. Evidently, $\R^n_\infty$ can be isometrically embedded into $\ell_\infty$ as its coordinate subspace. Further, recall that a mapping $f\:X\to Y$ between metric spaces is called \emph{nonexpanding}, if $\big|f(x)f(x')\big|\le|xx'|$ for any $x,x'\in X$.

\begin{prop}[see~\cite{Day}]\label{ass:non_expand}
Let $M$ be a finite metric space, $K\subset M$ its nonempty subspace, and suppose that $f\:K\to\ell_\infty$ is an arbitrary nonexpanding mapping. Then the mapping $f$ can be extended up to a nonexpanding mapping from the entire space $M$ to $\ell_\infty$, i.e., there exists a nonexpanding mapping $g\:M\to\ell_\infty$ such that $g|_K=f$.
\end{prop}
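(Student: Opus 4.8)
The plan is to reduce the assertion to a one-point extension problem and then to solve that problem coordinate by coordinate, exploiting the Helly property of intervals in $\R$. Since $M$ is finite, the difference $M\sm K$ is a finite set $\{p_1,\dots,p_m\}$, so it suffices to prove the following local step: if $L\ss M$ is finite, $g\:L\to\ell_\infty$ is nonexpanding, and $p\in M\sm L$, then $g$ extends to a nonexpanding map on $L\cup\{p\}$. Applying this step successively with $L=K$, then $L=K\cup\{p_1\}$, then $L=K\cup\{p_1,p_2\}$, and so on, yields the proposition, since each intermediate map is nonexpanding on its (finite) domain by construction. So I would fix $L$, $g$ and $p$ as above and look for a value $g(p)=(z_i)_{i\ge1}$ with $\|g(p)-g(k)\|\le|pk|$ for all $k\in L$ and with $(z_i)_i$ bounded.

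Next I would observe that the constraint $\|g(p)-g(k)\|\le|pk|$ is equivalent, coordinate by coordinate, to $z_i\in I_k^i:=\bigl[\,g(k)_i-|pk|,\ g(k)_i+|pk|\,\bigr]$ for every $i$ and every $k\in L$, since the $\ell_\infty$ norm is the supremum over coordinates. Thus for each fixed $i$ I need a point of $J^i:=\bigcap_{k\in L}I_k^i$. The intervals $I_k^i$, $k\in L$, intersect pairwise, because for $k,k'\in L$
$$
g(k)_i-g(k')_i\le\bigl|g(k)_i-g(k')_i\bigr|\le\|g(k)-g(k')\|\le|kk'|\le|pk|+|pk'|,
$$
where the third inequality is nonexpansion of $g$ and the last one the triangle inequality in $M$; rearranging gives $g(k)_i-|pk|\le g(k')_i+|pk'|$, and by symmetry $I_k^i\cap I_{k'}^i\ne\0$. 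A pairwise-intersecting finite family of closed intervals on the line has a common point (Helly's theorem in dimension one), so $J^i=\bigl[\max_{k\in L}(g(k)_i-|pk|),\ \min_{k\in L}(g(k)_i+|pk|)\bigr]$ is a nonempty interval, and I would set $z_i:=\max_{k\in L}\bigl(g(k)_i-|pk|\bigr)$, its left endpoint.

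It remains to check that $g(p)=(z_i)_i$ lies in $\ell_\infty$, i.e.\ is bounded. Fix any $k_0\in L$. Since $z_i\in J^i\ss I_{k_0}^i$, we get $|z_i|\le|g(k_0)_i|+|pk_0|\le\|g(k_0)\|+|pk_0|$, and the right-hand side is independent of $i$; hence $\sup_i|z_i|<\infty$. Extending $g$ by this $g(p)$ completes the local step, and iterating over $p_1,\dots,p_m$ proves the statement.

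I do not expect a genuine obstacle here: the argument is the familiar one showing that $\R$, and therefore $\ell_\infty$, is an injective (hyperconvex) metric space, with finiteness of $M$ used only to carry out the one-point-at-a-time induction. The sole point demanding a little attention is keeping the extended value inside $\ell_\infty$ rather than merely in the product $\R^\N$; choosing, in each coordinate, the explicit left endpoint $\max_{k\in L}(g(k)_i-|pk|)$ of $J^i$ instead of an arbitrary element of it makes the uniform bound transparent.
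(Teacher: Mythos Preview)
Your argument is correct: the coordinate-by-coordinate Helly/McShane extension is the standard way to see that $\ell_\infty$ is $1$-injective, and you handle the boundedness check cleanly. Note that the paper does not give its own proof of this proposition---it is simply quoted from~\cite{Day}---so there is no approach to compare against; your write-up supplies the details the paper leaves to the reference.
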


Similar statement holds for a nonexpanding mapping to the finite-dimensional space $\R^n_\infty$.

\begin{cor}\label{cor:expand}
Let $M$ be a finite metric space, $K\subset M$ its nonempty subspace, and suppose that $f\:K\to\R^n_\infty$ is an arbitrary nonexpanding mapping. Then the mapping $f$ can be extended up to a nonexpanding mapping from the entire space $M$ to $\R^n_\infty$, i.e., there exists a nonexpanding mapping $g\:M\to\R^n_\infty$ such that $g|_K=f$.
\end{cor}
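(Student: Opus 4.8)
The plan is to deduce this from Proposition~\ref{ass:non_expand} by lifting $f$ to an $\ell_\infty$-valued map, extending it there, and then projecting the extension back onto the finite-dimensional coordinate subspace. The only property of the maximum norm that the argument really uses is that cutting off the coordinates past the $n$-th is a nonexpanding operation on $\ell_\infty$; granting this, the proof reduces to composing nonexpanding maps.

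In detail, I would fix the isometric embedding $\iota\:\R^n_\infty\hookrightarrow\ell_\infty$ onto the coordinate subspace $\{x=\{x_i\}_{i=1}^\infty:x_i=0\text{ for all }i>n\}$. Then $\iota\circ f\:K\to\ell_\infty$ is nonexpanding, so by Proposition~\ref{ass:non_expand} there is a nonexpanding extension $h\:M\to\ell_\infty$ with $h|_K=\iota\circ f$. Next, let $\pi\:\ell_\infty\to\R^n_\infty$ be the truncation $\pi(\{x_i\}_{i=1}^\infty)=(x_1,\dots,x_n)$. Since $\max_{1\le i\le n}|x_i-y_i|\le\sup_i|x_i-y_i|$ for all $x,y\in\ell_\infty$, this $\pi$ is nonexpanding, and clearly $\pi\circ\iota=\mathrm{id}_{\R^n_\infty}$. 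Setting $g:=\pi\circ h$, we get that $g\:M\to\R^n_\infty$ is nonexpanding as a composition of nonexpanding maps, and for each $k\in K$ we have $g(k)=\pi\bigl(h(k)\bigr)=\pi\bigl(\iota(f(k))\bigr)=f(k)$, so $g$ extends $f$, which is what we need.

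I do not expect any genuine obstacle: the statement is a formal consequence of Proposition~\ref{ass:non_expand} together with the elementary remark about $\pi$. If one prefers to avoid the infinite-dimensional result altogether, the same conclusion also follows coordinatewise: a map $g=(g^1,\dots,g^n)\:M\to\R^n_\infty$ is nonexpanding if and only if each component $g^i\:M\to\R$ is $1$-Lipschitz, so it suffices to extend each $1$-Lipschitz function $f^i\:K\to\R$ separately, which the classical McShane formula $g^i(x)=\inf_{k\in K}\bigl(f^i(k)+|xk|\bigr)$ accomplishes. Either route produces the desired $g$.
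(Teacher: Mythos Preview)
Your argument is correct and follows exactly the paper's own proof: embed $\R^n_\infty$ as a coordinate subspace of $\ell_\infty$, extend via Proposition~\ref{ass:non_expand}, and project back using that the coordinate projection is nonexpanding and restricts to the identity on $\R^n_\infty$. The coordinatewise McShane alternative you sketch is a valid independent route, but the primary argument already matches the paper.
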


\begin{proof}
Indeed, let us identify the space $\R^n_\infty$ with a coordinate subspace of $\ell_\infty$. By~\ref{ass:non_expand}, we construct a nonexpanding mapping $g\:M\to\ell_\infty$ such that $g|_K=f$. Let us project $g(M)$ onto the coordinate subspace $\R^n_\infty\ss\ell_\infty$. Since the projection is identical on $\R^n_\infty$ and it does not expand distances, the mapping $M\to\R^n_\infty$ obtained as a result is that we sought for.
\end{proof}

\begin{prop}[Z.N.Ovsyannikov]\label{prop:Ovsyannikov}
Each shortest tree in $\R^n_\infty$ is a minimal filling for its boundary.
\end{prop}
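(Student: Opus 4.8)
The plan is to show that for any shortest tree $G=(V,E)$ in $\R^n_\infty$ with boundary $M=\{m_1,\dots,m_k\}$, and any isometric embedding $\mu\:M\to X$ into an arbitrary metric space $X$, we have $\smt\bigl(\mu(M),X\bigr)\ge|G|$. Since $\mf(M)$ is the infimum of such $\smt\bigl(\mu(M),X\bigr)$, this gives $\mf(M)\ge|G|=\smt(M,\R^n_\infty)\ge\mf(M)$, forcing equality, which is precisely the assertion that $G$ is a minimal filling for its boundary.

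First I would fix an arbitrary competitor: a metric space $X$, an isometric embedding $\mu\:M\to X$, and a tree $T=(W,F)$ on $X$ connecting $\mu(M)$. The goal is to build a tree on $\R^n_\infty$ connecting $M$ whose length is at most $|T|$; then taking lengths down to the infimum over $T$ and then over $(X,\mu)$ yields the inequality above. The natural device is a nonexpanding map $\psi\:W\to\R^n_\infty$ that restricts to $\mu^{-1}$ on $\mu(M)$, i.e. $\psi(\mu(m_i))=m_i$ for all $i$. Such a map exists: the finite set $\mu(M)\ss X$ receives the obviously nonexpanding map $m_i\mapsto m_i$ into $\R^n_\infty$ (it is an isometry onto $M$), and one extends it one point of $W$ at a time using Corollary~\ref{cor:expand} applied to the finite subspace of $X$ consisting of $\mu(M)$ together with the already-handled vertices of $W$. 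Since $T$ has finitely many vertices, finitely many applications suffice. Then $\psi(T)$ — the graph with vertex set $\psi(W)$ and an edge $\psi(v)\psi(w)$ for each $vw\in F$ — is a connected graph on $\R^n_\infty$ connecting $M$ (connectivity and the containment $M\ss\psi(W)$ are preserved under any map, and $\psi(\mu(m_i))=m_i$), and because $\psi$ is nonexpanding, $|\psi(T)|=\sum_{vw\in F}|\psi(v)\psi(w)|\le\sum_{vw\in F}|vw|=|T|$. Hence $\smt(M,\R^n_\infty)\le|\psi(T)|\le|T|$, and taking infima gives $\smt(M,\R^n_\infty)\le\mf(M)$; the reverse inequality $\mf(M)\le\smt(M,\R^n_\infty)$ is immediate from the definition of $\mf$ (take $X=\R^n_\infty$, $\mu$ the inclusion). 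Since $|G|=\smt(M,\R^n_\infty)$, we get $|G|=\mf(M)$.

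The main obstacle is purely the bookkeeping in the one-point-at-a-time extension: one must make sure that at each stage the map defined so far is a nonexpanding map from a genuine \emph{subspace} of a finite metric space into $\R^n_\infty$, so that Corollary~\ref{cor:expand} is literally applicable. Concretely, enumerate $W=\{w_1,\dots,w_p\}$, let $M_0=\mu(M)$ with $f_0\:M_0\to\R^n_\infty$ the isometry $\mu(m_i)\mapsto m_i$, and inductively set $M_{j}=M_{j-1}\cup\{w_j\}$ (a finite subspace of $X$) and use Corollary~\ref{cor:expand} with $K=M_{j-1}$, $M=M_j$ to extend $f_{j-1}$ to a nonexpanding $f_j\:M_j\to\R^n_\infty$; after $p$ steps $\psi:=f_p$ is defined on all of $W$ and restricts to $\mu^{-1}$ on $\mu(M)$. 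Once this is in place the remaining steps are routine, since nonexpanding maps can only shorten edges and can never disconnect a connected graph or drop a required vertex.
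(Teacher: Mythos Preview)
Your argument is correct. Both your proof and the paper's rest on the same key ingredient, the nonexpanding extension property of $\R^n_\infty$ (Corollary~\ref{cor:expand}), but you deploy it differently. The paper first passes through a Kuratowski embedding $K\hookrightarrow\R^N_\infty$ and invokes an external result (Corollary~3.3 of~\cite{ITMinFil}) asserting that in $\R^N_\infty$ some shortest tree on the image is already a minimal filling; it then uses Corollary~\ref{cor:expand} once to transport that specific tree back down to $\R^n_\infty$. You instead attack an arbitrary competitor directly: given any tree $T$ in any ambient space $X$ connecting an isometric copy of $M$, you use Corollary~\ref{cor:expand} to push $T$ forward into $\R^n_\infty$ without increasing its length, which immediately gives $\smt(M,\R^n_\infty)\le\mf(M)$. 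Your route is more self-contained, since it avoids the citation to~\cite{ITMinFil}; the paper's route is shorter on the page precisely because it outsources the existence of a realized minimal filling to that reference.

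One minor simplification: your inductive one-vertex-at-a-time extension is unnecessary. Since $W$ is a finite metric space and $\mu(M)\ss W$, a single application of Corollary~\ref{cor:expand} with $K=\mu(M)$ and ambient space $W$ already produces the desired nonexpanding $\psi\:W\to\R^n_\infty$ extending $\mu^{-1}$.
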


\begin{proof}
Let $K\subset\R^n_\infty$ be an arbitrary nonempty finite subset. Consider an isometric Kuratowski embedding~\cite{ITMinFil} of this space into $\R^N_\infty$. By Corollary~3.3 from~\cite{ITMinFil}, its image $K'\subset\R^N$ can be connected by a shortest tree $\G'$ which is simultaneously a minimal filling of $K'$. By $M'\ss\R^N$ we denote the vertex set of $\G'$ with metric induced from $\R^N$. Consider the inverse isometric embedding $K'\to K\subset\R^n_\infty$. By Corollary~\ref{cor:expand}, it can be extended to an isometric embedding $f\:M'\to\R^n_\infty$. We put $M=f(M')$, and let $\G$ be a tree on $M$ for which $f$ is an isomorphism between $\G'$ and $\G$. The tree $\G$ connects $K$ and its length equals to the length of $\G'$, thus, $\G$ is a minimal filling for $K$ and, hence, $\G$ is also a Steiner minimal tree on $K$. Since all shortest trees on the same boundary have the same length, then all this trees are minimal fillings for $K$.
\end{proof}

In~\ref{examp:three-points-metric-spaces} we have constructed an isometrical mapping $\nu\:\cM_3\to\R^3_\infty$ whose image is the polyhedral cone
$$
\cC=\bigl\{(a,b,c)\mid 0\le a\le b\le c\le a+b\bigr\}\ss\R^3_\infty.
$$
The interior $\Int\cC$ of this cone corresponds to nondegenerate $3$-point metric spaces, i.e., to those spaces, where all the triangle inequalities hold strictly. It is easy to understand that for each point $P\in\Int\cC$ there exists a neighborhood $U_\e(P)\ss\R^3_\infty$ such that for any finite $M\ss U_\e(P)$ and any $G=(V,E)\in\SMT(M,\R^3_\infty)$ the inclusion $V\ss\Int\cC$  holds. Let us put $V'=\nu^{-1}(V)$, and construct $E'$ by taking exactly those $vw$ for which $\nu(v)\nu(w)\in E$. Thus, $G'$ is a tree on $\cM$. By~\ref{prop:Ovsyannikov}, the tree $G$ is a minimal filling, hence, due to that $\nu$ is isometrical, the tree $G'$ is a minimal filling as well, therefore, $G'\in\SMT\bigl(\nu^{-1}(M),\cM\bigr)$. Thus, we have proved the following result.

\begin{thm}
Let $T$ be an arbitrary $3$-point metric space all of whose triangle inequalities are strict. Then there exists $\e>0$ such that for any finite subset $M$ of the neighborhood $U_\e(T)\ss\cM_3$ each Steiner minimal tree $G\in\SMT(M,\cM)$ is a minimal filling of $M$.
\end{thm}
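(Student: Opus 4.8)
The plan is to carry the problem over to $\R^3_\infty$ via the isometry $\nu\:\cM_3\to\cC$ of Example~\ref{examp:three-points-metric-spaces} (onto the cone $\cC$), invoke Ovsyannikov's theorem there, and pull the resulting tree back to $\cM$. Since $T$ is nondegenerate, the point $P:=\nu(T)$ lies in the interior $\Int\cC$, and $U_\e(T)=\nu^{-1}\bigl(U_\e(P)\bigr)$ for every $\e>0$. Note first that for any finite $M\ss\cM_3$ one has $\smt(M,\cM)\ge\mf(M)$, directly from the definition of $\mf$ applied to the isometric inclusion $M\hookrightarrow\cM$. Hence it suffices to produce, for every finite $M$ in a suitable neighbourhood $U_\e(T)$, a single tree in $\cM$ that connects $M$ and has length $\mf(M)$: then $\smt(M,\cM)=\mf(M)$, so every $G\in\SMT(M,\cM)$ satisfies $|G|=\smt(M,\cM)=\mf(M)$ and, connecting $M$, is a minimal filling of $M$.

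To fix $\e$ I would use a locality property of shortest trees in $\R^3_\infty$. For a finite set $\hat M\ss\R^3_\infty$ let $\Pi(\hat M)$ be the smallest closed axis-parallel box containing $\hat M$. Coordinatewise clamping onto $\Pi(\hat M)$ is a nonexpanding self-map of $\R^3_\infty$ fixing $\hat M$, so it carries any connected graph on $\hat M$ to one of no greater length; since shortest trees on finite subsets of $\R^n_\infty$ exist (see Corollary~3.3 of~\cite{ITMinFil}), there is therefore a shortest tree $\hat G\in\SMT(\hat M,\R^3_\infty)$ all of whose vertices lie in $\Pi(\hat M)$. For $\hat M\ss U_\e(P)$ the box $\Pi(\hat M)$ is contained in the closed ball $\overline{U_\e(P)}$, which in $\R^3_\infty$ is exactly the cube of edge $2\e$ centred at $P$; since $\Int\cC$ is open and $P\in\Int\cC$, choose $\e>0$ so small that this cube lies in $\Int\cC$, and fix this $\e$.

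Now let $M\ss U_\e(T)$ be finite, put $\hat M:=\nu(M)\ss U_\e(P)$, and pick $\hat G\in\SMT(\hat M,\R^3_\infty)$ with vertex set $\hat V\ss\Pi(\hat M)\ss\Int\cC\ss\cC=\nu(\cM_3)$ as above. By Proposition~\ref{prop:Ovsyannikov}, $\hat G$ is a minimal filling for $\hat M$, so $|\hat G|=\mf(\hat M)$; moreover $\mf(\hat M)=\mf(M)$ because $\nu$ restricts to an isometry from $M$ onto $\hat M$. Pulling $\hat G$ back by $\nu^{-1}$---taking the vertices $\nu^{-1}(\hat V)$ and replacing each edge $\hat v\hat w$ of $\hat G$ by $\nu^{-1}(\hat v)\,\nu^{-1}(\hat w)$---yields a tree $G'$ in $\cM_3\ss\cM$ that connects $M$ (since $\hat M\ss\hat V$) and has $|G'|=|\hat G|$ (since $\nu^{-1}$ is an isometry on $\cC$). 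Therefore $\smt(M,\cM)\le|G'|=\mf(M)$, which together with the reverse inequality of the first paragraph gives $\smt(M,\cM)=\mf(M)$, and the theorem follows.

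The only genuinely nontrivial step is the localization in the second paragraph, i.e.\ guaranteeing that the relevant shortest trees in $\R^3_\infty$ have all their vertices inside $\Int\cC$. The clamping argument settles this for \emph{one} shortest tree on each $\hat M$, which is all the proof above uses; establishing it simultaneously for \emph{every} shortest tree on $\hat M$, with $\e$ chosen uniformly in $M$ (as asserted in the discussion preceding the statement), needs an additional boundedness estimate for vertices of $\ell_\infty$-shortest trees and is not required for the conclusion. The remaining ingredients---the behaviour of $\nu$, the two-sided comparison with $\mf$, and the appeal to Ovsyannikov's Proposition~\ref{prop:Ovsyannikov}---are routine.
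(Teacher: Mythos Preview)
Your argument is correct and follows essentially the same route as the paper's: transfer via $\nu$ to the cone in $\R^3_\infty$, invoke Ovsyannikov's result (Proposition~\ref{prop:Ovsyannikov}) there, and pull back. The paper's text simply asserts the localization step (``it is easy to understand'' that all vertices of any $G\in\SMT(\hat M,\R^3_\infty)$ lie in $\Int\cC$), whereas you supply a clean justification via coordinatewise clamping onto the bounding box $\Pi(\hat M)$; you also make explicit the passage from ``some tree of length $\mf(M)$ exists in $\cM$'' to ``every $G\in\SMT(M,\cM)$ is a minimal filling'' through the equality $\smt(M,\cM)=\mf(M)$, which the paper leaves implicit. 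Your closing remark is apt: only one well-placed shortest tree in $\R^3_\infty$ is needed, so the paper's stronger localization claim is not required for the theorem as stated.
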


It turns out that for ``big'' boundary sets their Steiner minimal trees in $\cM$ may sometimes be not minimal fillings.

\begin{examp}
Consider $3$-point metric spaces $A^i=\{a^i_1,a^i_2,a^i_3\}$, $i=1,2,3$, for which the vectors $\nu(A^i)$ of nonzero distances are given by the following formulae:
\begin{equation}\label{eq:star}\tag{*}
B^1=\nu(A^1)=(8,\,22,\, 29.5),\ B^2=\nu(A^2)=(11.5,\,18,\,29),\ B^3=\nu(A^3)=(12,\,21.5,\,33). \\
\end{equation}
Let $S\in\R^3_\infty$ be the point with coordinates $(10,20,31)$. It does not satisfy to triangle inequalities, hence, it corresponds to neither $3$-point metric space, i.e., $\nu^{-1}(S)=\0$.

\begin{lem}[\cite{ITMinFil}]\label{lem:min-fil-tri}
For any metric space $X=\{P_1,P_2,P_3\}$, $\ell_i=|P_jP_k|$, $\{i,j,k\}=\{1,2,3\}$, its minimal filling is a star-like tree with the additional vertex $Z$ such that $|ZP_i|=(\ell_j+\ell_k-\ell_i)/2$, hence, the length of this minimal filling is equal to $(\ell_1+\ell_2+\ell_3)/2$, i.e., to the half-perimeter of the triangle $X$.
\end{lem}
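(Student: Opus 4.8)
The plan is to establish the equality $\mf(X)=(\ell_1+\ell_2+\ell_3)/2$ by proving the two inequalities separately, and then to read off the stated shape and edge lengths of the extremal tree from the equality case of the lower bound.

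For the upper bound I would exhibit the candidate filling explicitly. Put $x_i=(\ell_j+\ell_k-\ell_i)/2$ for $\{i,j,k\}=\{1,2,3\}$; these numbers are nonnegative by the triangle inequality in $X$. Consider the abstract star $G_0$ with a center $Z$ joined to leaves $P_1,P_2,P_3$ by edges of weights $x_1,x_2,x_3$, and equip the vertex set $\{Z,P_1,P_2,P_3\}$ with the shortest‑path metric of this weighted tree. Since $x_i+x_j=\ell_k$, the metric induced on $\{P_1,P_2,P_3\}$ coincides with that of $X$, so this is a genuine isometric embedding of $X$ into a $4$‑point metric space, and on it $G_0$ is a connecting tree with $|ZP_i|=x_i$ and $|G_0|=x_1+x_2+x_3=(\ell_1+\ell_2+\ell_3)/2$. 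Hence $\smt$ in this ambient space, and therefore $\mf(X)$, is at most the half‑perimeter. (If the triangle is degenerate one $x_i$ vanishes and $G_0$ collapses to a two‑edge path, but the computation is unchanged.)

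For the lower bound, let $\mu\colon X\to Y$ be an arbitrary isometric embedding and $G=(V,E)$ an arbitrary tree in $Y$ connecting $\mu(X)$; I will show $|G|\ge(\ell_1+\ell_2+\ell_3)/2$. Using the tree structure, take the median vertex $m\in V$ of the triple $\mu(P_1),\mu(P_2),\mu(P_3)$, i.e.\ the vertex lying on all three pairwise paths, and let $\g_i$ be the path $[\,m,\mu(P_i)\,]$ in $G$; the $\g_i$ are pairwise edge‑disjoint. Writing $d_i$ for the sum of the lengths of the edges of $\g_i$, edge‑disjointness gives $|G|\ge d_1+d_2+d_3$. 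On the other hand $\g_i\cup\g_j$ is a walk in $Y$ from $\mu(P_i)$ to $\mu(P_j)$, so the iterated triangle inequality gives $d_i+d_j\ge|\mu(P_i)\mu(P_j)|=\ell_k$. Summing the three such inequalities yields $2(d_1+d_2+d_3)\ge\ell_1+\ell_2+\ell_3$, whence $|G|\ge(\ell_1+\ell_2+\ell_3)/2$. Taking the infimum over $G$, $\mu$ and $Y$ gives $\mf(X)\ge(\ell_1+\ell_2+\ell_3)/2$, and together with the upper bound this pins down the value.

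Finally, to identify the minimal filling itself I would run the lower‑bound estimate backwards: if $|G|$ attains the half‑perimeter, every inequality used above is an equality, so $G$ has no edges outside $\g_1\cup\g_2\cup\g_3$, each $\g_i$ is a geodesic of $Y$, and $d_i+d_j=\ell_k$ forces $d_i=x_i$; replacing each geodesic $\g_i$ by a single edge of the same length turns $G$ into exactly the star of the upper‑bound construction. The step I expect to be the main obstacle is the structural claim in the lower bound — that inside an arbitrary, possibly large, connecting tree $G$ there is a median vertex $m$ and that the three legs $\g_i$ are edge‑disjoint — although this is a standard elementary fact about finite trees; the rest is routine triangle‑inequality bookkeeping.
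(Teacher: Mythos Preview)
Your argument is correct. The paper does not supply its own proof of this lemma but merely cites it from~\cite{ITMinFil}; your upper bound via the explicit star with edge lengths $x_i=(\ell_j+\ell_k-\ell_i)/2$ and your lower bound via the median (tripod) vertex of an arbitrary connecting tree together with the three summed triangle inequalities is precisely the standard proof.
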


Return to our example. Direct calculations give us $|B_iB_j|=4$, $|B_iS|=2$ in $\R^3_\infty$, thus, the total distance in $\R^3_\infty$ from $S$ to the points $B^i$ equals to half-perimeter of the triangle $B^1B^2B^3$, therefore, by~\ref{lem:min-fil-tri}, the tree $\bigl(\{B^1,B^2,B^3,S\},\{B^1S,B^2S,B^3S\}\bigr)$ is a minimal filling for the space $\{B^1,B^2,B^3\}\ss\R^3_\infty$, hence, it is also a shortest tree in $\R^3_\infty$ that connects $\{B^1,B^2,B^3\}$.

Let us show that a Steiner minimal tree for the boundary $M=\{A^1,A^2,A^3\}\in\cM$ (it exists due to~\ref{prop:Steiner-existens-GH}) is not a minimal filling for this boundary.

Indeed, if it is a minimal filling, then, by~\ref{lem:min-fil-tri}, there exists a point $Z\in\cM$ such that $d_{GH}(Z,A^i)=1$ for every $i=1,2,3$. Suppose that such point $Z$ does exist. By~\ref{prop:Steiner-existens-GH}, without loss of generality, one can suppose that $Z\in\cM_7$. Put $Z=\{z_1,\ldots,z_7\}$. Choose an arbitrary $R_i\in\cR^0_\opt(A_i,Z)$, then $\dis R_i=2$. Since for each $i$ the distances between distinct $a^i_j$ and $a^i_k$ are greater than $2$, then $R_i(a^i_j)\cap R_i(a^i_k)=\0$, hence, we have three partitions $Z=\sqcup_{k=1}^3R_i(a^i_k)$.

Further, the condition $\dis R_i=2$ implies that $\diam R_i(a^i_k)\le 2$ for all $i$ and $k$; moreover, since the least possible distance between distinct points of the space $B^i$ equals $8$, then the least possible distance between points from distinct $R_i(a^i_j)$ and $R_i(a^i_k)$ is not less than $6$. This implies that all partitions $\big\{R_i(a^i_k)\big\}_{k=1}^3$ are coincide.

Let $|a^1_1a^1_2|=29.5$, $|a^1_1a^1_3|=22$, $|a^1_2a^1_3|=8$, then for any $x\in R_1(a^1_1)$, $y\in R_1(a^1_2)$, and $z\in R_1(a^1_3)$ we have $|xy|\in[27.5,31.5]$, $|xz|\in[20,24]$, and $|yz|\in[6,10]$. Similarly construct the corresponding triplets of segment for $A^2$ and $A^3$. As a result, we get three triplets
\begin{equation}\tag{**}\label{eq:table}
\begin{array}{ccc}
[27.5,\,31.5],&[20,\,24],&[6,\,10], \cr
[27,\,31],&[16,\,20],&[9.5,\,13.5], \cr
[31,\,35],&[19.5,\,23.5],&[10,\,14].
\end{array}
\end{equation}
Notice that each of the three distances $|xy|$, $|yz|$, $|xz|$ has to belong to at least two other segments: one corresponding to $A^2$ and other one corresponding to $A^3$ --- the second and the third row in the table~(\ref{eq:table}). Thus, for each of these distances the corresponding three segments, one per each row, must have common intersection. It is easy to see that in the table~(\ref{eq:table}) the only segments which intersect each other are the ones in the same column. Thus, $|xy|\in[27.5,\,31.5]\cap[27,\,31]\cap[31,\,35]=\{31\}$, hence $|xy|=31$. Similarly we show that $|xz|=20$ and $|yz|=10$. Thus, there is a triplet of points in $Z$ that does not satisfy the triangle inequality, a contradiction.

In~\cite{ITMinFil} we have introduced the concept of \emph{Steiner subratio $\ssr(X)$} of a metric space $X$, namely,
$$
\ssr(X)=\inf\biggl\{\frac{\mf(M)}{\smt(M,X)}:M\ss X,\,2\le\#M<\infty\biggr\}.
$$

Taking into account the discussion above, we get the following conclusion.

\begin{cor}
For the boundary $\{A^1,\,A^2,\,A^3\}\ss\cM$, where $A^i$ are the $3$-point metric spaces, whose distances are given by the formulae~$(\ref{eq:star})$, its Steiner minimal tree in $\cM$ does exist and differs from minimal filling. Hence, $\ssr(\cM)<1$.
\end{cor}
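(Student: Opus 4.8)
The plan is to use the single boundary $M=\{A^1,A^2,A^3\}$ of the preceding Example and to show that $\smt(M,\cM)>\mf(M)$; since $\ssr(\cM)\le\mf(M)/\smt(M,\cM)$ by definition, this forces $\ssr(\cM)<1$ at once. First I would pin down the two lengths. By the isometry $\nu$ of Example~\ref{examp:three-points-metric-spaces}, $d_{GH}(A^i,A^j)=\frac12\|B^i-B^j\|$, and a one-line computation from~(\ref{eq:star}) gives $\|B^i-B^j\|=4$ for all $i\ne j$, so $M$ is the equilateral $3$-point space of side $2$. Lemma~\ref{lem:min-fil-tri} then yields $\mf(M)=\frac12(2+2+2)=3$, realized by the star whose centre lies at distance $1$ from each $A^i$. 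A Steiner minimal tree $G\in\SMT(M,\cM)$ exists by Proposition~\ref{prop:Steiner-existens-GH}, and by the refinement in that proposition we may take $G$ with vertex set contained in $\cM_N$, $N=3+3+3-2=7$. If this $G$ were a minimal filling of $M$ then $|G|=\mf(M)=3$ and, by Lemma~\ref{lem:min-fil-tri}, $G$ would be the star with a central vertex $Z$ (a vertex of $G$, hence $Z\in\cM_7$) satisfying $d_{GH}(Z,A^i)=1$ for $i=1,2,3$. So the whole statement reduces to showing that no such equidistant centre $Z\in\cM$ exists.

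To rule out $Z$ I would argue by contradiction, following the Example. Write $Z=\{z_1,\dots,z_7\}$ and choose optimal irreducible correspondences $R_i\in\cR^0_\opt(A^i,Z)$, which exist by Corollary~\ref{cor:irreducible-opt}; then $\dis R_i=2\,d_{GH}(A^i,Z)=2$. Since every pairwise distance inside each $A^i$ is at least $8$, hence $>2=\dis R_i$, the three fibres $R_i(a^i_1),R_i(a^i_2),R_i(a^i_3)$ are pairwise disjoint, and being surjective images they partition $Z$ into three nonempty parts, each of diameter $\le\dis R_i=2$. The crucial \emph{rigidity} step is that the three partitions so obtained coincide: since every pairwise distance inside $A^i$ is $\ge8$, any two points of $Z$ lying in distinct fibres of $R_i$ are at distance $\ge8-2=6$; thus a part of one partition meeting two distinct parts of another would contain two points at distance $\ge6$ while having diameter $\le2$ --- impossible. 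Hence each part of one partition sits inside a part of the other, and by symmetry all three partitions equal a common $\{P_1,P_2,P_3\}$.

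Now I would extract the numerical contradiction. Fix $x\in P_1$, $y\in P_2$, $z\in P_3$. For each $i$ the fibres of $R_i$ label the parts $P_1,P_2,P_3$ by the vertices $a^i_1,a^i_2,a^i_3$ in some order, so each of the numbers $|xy|$, $|xz|$, $|yz|$ lies in the $2$-neighbourhood $[\,s-2,\,s+2\,]$ of the corresponding side length $s$ of $A^i$, and as the pair runs over the three unordered pairs these neighbourhoods run over the three sides of $A^i$. Writing out the nine resulting intervals --- one row of three per $A^i$ --- reproduces the table~(\ref{eq:table}). Each of $|xy|$, $|xz|$, $|yz|$ must lie in one interval from each row; within a row the three centres differ by more than $4$, so there the assignment is a bijection, and the data~(\ref{eq:star}) are arranged exactly so that, across rows, only same-column intervals intersect, the column triple-intersections being $\{31\}$, $\{20\}$, $\{10\}$. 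Hence $\{|xy|,|xz|,|yz|\}=\{31,20,10\}$, and $31>20+10$ violates the triangle inequality in $Z$ --- a contradiction. So no equidistant centre exists, $G$ is not a minimal filling, $\smt(M,\cM)>3=\mf(M)$, and therefore $\ssr(\cM)\le\mf(M)/\smt(M,\cM)<1$.

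The main obstacle is the joint design of the example: the intra-$A^i$ distances must be large enough (here $\ge8$, comfortably above $2+\dis R_i$) that the three fibre-partitions of $Z$ are forced to agree, while simultaneously the sorted side-triples $B^i$ must be clustered tightly around the non-realizable point $S=(10,20,31)$ so that the $2$-neighbourhoods of corresponding sides overlap only ``diagonally'', pinning the three distances in $Z$ to single values that fail the triangle inequality. Checking that the explicit data~(\ref{eq:star}) meet both requirements at once is the real content; the rest is routine bookkeeping with optimal irreducible correspondences.
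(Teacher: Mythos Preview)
Your proposal is correct and follows essentially the same route as the paper's own argument: existence of the Steiner tree via Proposition~\ref{prop:Steiner-existens-GH} (with the $\cM_7$ refinement), reduction via Lemma~\ref{lem:min-fil-tri} to the non-existence of a centre $Z$ with $d_{GH}(Z,A^i)=1$, the coincidence of the three fibre-partitions of $Z$ forced by the gap $8>2+2$, and the numerical contradiction read off from table~(\ref{eq:table}). You supply a bit more justification than the paper for why the star shape and the edge lengths $1,1,1$ are forced and for why the three partitions agree, but the argument is the same.
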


\begin{rk}
We carried out a numerical experiment and got more precise estimate, namely, $\ssr(\cM)<0.857$. It would be interesting to get the exact value of the Steiner subratio for the Gromov--Hausdorff space.
\end{rk}
\end{examp}


\begin{thebibliography}{9}
\bibitem{IvaTuz} Ivanov A.O., Tuzhilin A.A. \emph{Minimal Networks. Steiner Problem and Its Generalizations}. CRC Press, 1994.
\bibitem{HwRiW} Hwang F.K., Richards D.S., Winter P. \emph{The Steiner Tree Problem}. Annals of Discrete Mathematics 53, North-Holland: Elsevier, 1992. ISBN 0-444-89098-X.
\bibitem{ITMinFil} Ivanov A.O., Tuzhilin A.A., ``One-dimensional Gromov Minimal Filling,'' ArXiv e-prints, {\tt arXiv:1101.0106} (2011).
\bibitem{BB} Bednov B.B., Borodin P.A., ``Banach Spaces that Realize Minimal Fillings,'' Sbornik: Math., {\bf 205} (4), pp.~459--475 ( 2014).
\bibitem{BurBurIva} Burago D., Burago Yu., Ivanov S. \emph{A Course in Metric Geometry}. Graduate Studies in Mathematics, vol.33. A.M.S., Providence, RI, 2001.
\bibitem{IvaIliadisTuz} Ivanov A.O., Iliadis S., Tuzhilin A.A., ``Realizations of Gromov-Hausdorff Distance,'' ArXiv e-prints, {\tt arXiv:1603.08850} (2016).
\bibitem{IvaNikolaevaTuzSteiner} Ivanov A.O., Nikolaeva N.K., Tuzhilin A.A., ``Steiner Problem in Gromov--Hausdorff Space: the case of finite metric spaces,'' ArXiv e-prints, {\tt arXiv:1604.02170}, (2016).
\bibitem{Ovs} Ovsyannikov Z.N. Course work, Faculty of Mechanics and Mathematics, Lomonosov Moscow State University,  Moscow, 2010.
\bibitem{Day} Day M.M., \emph{Normed linear spaces}, Springer, 1962.
\end{thebibliography}
\end{document}